\newtheorem*{rep@theorem}{\rep@title}
\newcommand{\newreptheorem}[2]{%
\newenvironment{rep#1}[1]{%
 \def\rep@title{#2 \ref{##1}}%
 \begin{rep@theorem}}%
 {\end{rep@theorem}}}
\newtheorem{theorem}{Theorem}
\newtheorem{lemma}{Lemma}
\newtheorem{proposition}{Proposition}
\newtheorem{corollary}{Corollary}
\newtheorem{observation}{Observation}
\newtheorem{conjecture}{Conjecture}
\newtheorem{question}{Question}
\theoremstyle{definition}
\newtheorem{definition}{Definition}
\newtheorem{remark}{Remark}
\newcommand{\nnn}{\mathbb{N}}
\newcommand{\zzz}{\mathbb{Z}}
\newcommand{\Z}{\mathbb{Z}}
\newcommand{\N}{\mathbb{N}}
\DeclareMathOperator{\cut} { \setminus}
\begin{document}

\title[]{Sets Arising as Minimal Additive Complements in the Integers} \keywords{}
\subjclass[2010]{}

\author[]{Amanda Burcroff}
\address[]{Centre for Mathematical Sciences, University of Cambridge, Cambridge, UK}
\email{agb63@cam.ac.uk}
\author[]{Noah Luntzlara}
\address[]{Department of Mathematics, University of Michigan, Ann Arbor, MI, USA}
\email{nluntzla@umich.edu}

\begin{abstract} 
 A subset $C$ of an abelian group $G$ is a {\it minimal additive complement} to $W \subseteq G$ if $C + W = G$ and if $C' + W \neq G$ for any proper subset $C' \subset C$.  In this paper, we study which sets of integers arise as minimal additive complements. We confirm a conjecture of Kwon, showing that bounded-below sets with arbitrarily large gaps arise as minimal additive complements. Moreover, our construction shows that any such set belongs to a co-minimal pair, strengthening a result of Biswas and Saha for lacunary sequences.  We bound the upper and lower Banach density of syndetic sets that arise as minimal additive complements to finite sets. We provide some necessary conditions for an eventually periodic set to arise as a minimal additive complement and demonstrate that these necessary conditions are also sufficient for certain classes of eventually periodic sets.   We conclude with several conjectures and questions concerning the structure of minimal additive complements.
\end{abstract}
\maketitle

\section{Introduction}\label{sec: introduction}

Let $C$ and $W$ be subsets of an abelian group $G$, which we will mainly take to be the integers
under addition.
The \emph{Minkowski sum} of $C$ and $W$ is the set $C + W = \{c + w: c \in C, w \in W\}$.  The set $C$ is an \emph{additive complement} to $W$ if $C + W = G$. Moreover, $C$ is a \emph{minimal additive complement} to $W$ if no proper subset $C'\subset C$ is an additive complement to $W$.  In this case, we say $W$ {\it has} a minimal additive complement and $C$ {\it arises as} a minimal additive complement.  In this paper, we study the conditions under which a subset of the integers arises as a minimal additive complement.  

Minimal additive complements were introduced by Nathanson \cite{Nat} in 2011 as an arithmetic analogue to the metric concept of \emph{$h$-nets} in groups. In particular, Nathanson asked which subsets of the integers have a minimal additive complement.  Nathanson showed that every finite set of integers has a minimal additive complement, and moreover that every additive complement to a finite set contains a minimal additive complement.  Chen and Yang 
studied Nathanson's question for infinite subsets of the integers, providing a partial answer with the following results.
\begingroup
\renewcommand*{\thetheorem}{\Alph{theorem}}
\begin{theorem}[Chen, Yang \cite{CY}]\label{thm: CY1}
If $W \subseteq \zzz$ is bounded neither above nor below, then $W$ has a minimal additive complement.
\end{theorem}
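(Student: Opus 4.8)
The plan is to build a minimal complement $C$ by hand, greedily along an enumeration of $\mathbb{Z}$, using the two-sided unboundedness of $W$ to guarantee at every stage both that we make progress toward covering $\mathbb{Z}$ and that each element we commit to stays irreplaceable. Throughout I would use the following reformulation of minimality, which I prove first: a complement $C$ of $W$ is minimal if and only if every $c \in C$ admits a \emph{private point} $x_c \in \mathbb{Z}$ with $x_c \in c + W$ but $x_c \notin c' + W$ for all $c' \in C \setminus \{c\}$ --- equivalently, writing $x_c = c + w$ with $w \in W$, one needs $w + (c - c') \notin W$ for every other $c' \in C$. Indeed, since $C$ is a complement, $C \setminus \{c\}$ fails to be one exactly when some point is covered only by $c$. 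Because translating $W$ translates every complement and preserves both conditions, I may further normalize $0 \in W$, so that each $c$ already covers itself.

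I would fix an enumeration $\mathbb{Z} = \{z_1, z_2, \dots\}$ and construct finite sets $\emptyset = C_0 \subseteq C_1 \subseteq \cdots$, maintaining as an invariant that every element of $C_n$ has a private point with respect to $C_n$. At stage $n$, if $z_n \in C_{n-1} + W$ I set $C_n = C_{n-1}$; otherwise I adjoin an element $c_{\mathrm{new}}$ with $z_n - c_{\mathrm{new}} \in W$. The delicate requirement is that adjoining $c_{\mathrm{new}}$ must neither leave $c_{\mathrm{new}}$ itself superfluous nor wreck the privacy of the elements already chosen.

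The key step, which I expect to be the main obstacle, is a \emph{feasibility lemma}: such a $c_{\mathrm{new}}$, together with a surviving private point for it, can always be produced. The naive device of freezing one private point $x_c$ per element and then forbidding every later element from covering it \textbf{fails}, and it is instructive to see why: since $W$ is unbounded in both directions, each translate $x_c - W$ is itself unbounded both above and below, so the union of forbidden coverers can swallow the entire admissible set $z_n - W$, leaving no legal move. The remedy is to choose private points \emph{adaptively} and to exploit the freedom --- granted precisely by two-sided unboundedness --- in placing $c_{\mathrm{new}}$. To endow $c_{\mathrm{new}}$ with a private point I need some $w \in W$ with $w + (c_{\mathrm{new}} - c') \notin W$ for all $c' \in C_{n-1}$; having latitude in both the position of $c_{\mathrm{new}}$ (subject only to $z_n - c_{\mathrm{new}} \in W$) and the witness $w$, I would argue that $W$ has enough room, on the side opposite to the one forcing the conflict, to land $w$ in a gap of $W$ relative to the finitely many differences $c_{\mathrm{new}} - c'$. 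Formulating this lemma so that the chosen private point is moreover shielded from all \emph{future} stages is the technical heart of the proof, and it is exactly here that the hypothesis ``unbounded above and below'' is indispensable; for sets bounded on one side the analogous step can genuinely deadlock, which is consistent with such sets sometimes failing to have minimal complements.

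Granting the feasibility lemma, I would finish by setting $C = \bigcup_n C_n$. Fairness of the enumeration forces $z_n \in C_n + W \subseteq C + W$ for every $n$, so $C + W = \mathbb{Z}$ and $C$ is a complement. Since the lemma is arranged so that the private point eventually attached to each $c$ is never covered by a later element, every $c \in C$ retains a private point with respect to all of $C$, and the reformulation of the first paragraph then certifies that $C$ is minimal. The remaining work is bookkeeping: checking that the adaptively chosen private points are mutually consistent in the limit, which the protection built into the feasibility step is designed to ensure.
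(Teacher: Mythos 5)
A preliminary remark: the paper you are working from does not prove this statement at all; it is quoted as Theorem A from Chen and Yang \cite{CY}, so your attempt can only be judged on whether it stands alone. It does not, because the single claim that carries all of the content --- what you call the \emph{feasibility lemma} --- is never proved. Your reduction of minimality to private points (the paper's ``dependent elements'') is correct, and the greedy framework is a reasonable strategy, but everything hinges on the assertion that at each stage one can pick $c_{\mathrm{new}} \in z_n - W$ whose translate $c_{\mathrm{new}} + W$ misses the finitely many previously protected points, and such that some private point of $c_{\mathrm{new}}$ can be shielded from all future translates. Your justification --- that $W$ ``has enough room, on the side opposite to the one forcing the conflict, to land $w$ in a gap of $W$'' --- is not an argument, and taken at face value it is false: if $2\mathbb{Z} \subseteq W$ (say $W = 2\mathbb{Z} \cup \mathbb{N}$, or $W = \mathbb{Z}$), and some old private point $x$ satisfies $x \equiv z_n \pmod 2$, then \emph{every} choice $c_{\mathrm{new}} = z_n - w$ with $w$ even covers $x$, and no gap of $W$ can help; such sets $W$ may have no usable gaps whatsoever. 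The only possible rescue is to show that the invariants of the construction (privacy of the old witnesses together with the fact that $z_n$ is uncovered) make such bad configurations unreachable --- indeed, in these examples one can check that they are --- but establishing this in general is precisely the substance of Chen--Yang's theorem, and it is the step you have omitted. (You also overlook a genuine simplification: $z_n$ itself can serve as the private point of $c_{\mathrm{new}}$, exactly because $z_n$ is uncovered at stage $n$; the real difficulty is solely to protect the \emph{old} witnesses from the new and all future translates.)

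Your proposed escape from the deadlock you correctly diagnose --- choosing private points ``adaptively'' --- opens a second gap, at the limit stage. If witnesses may change from stage to stage, then knowing that every $c \in C_n$ has a private point with respect to $C_n$ for each $n$ does not yield a private point with respect to $C = \bigcup_n C_n$: the sets $P_c^{(n)} = (c + W) \setminus \bigl((C_n \setminus \{c\}) + W\bigr)$ are nested, decreasing and nonempty, but if they are infinite their intersection can perfectly well be empty (they can drift off to infinity). So either you freeze witnesses and must confront the deadlock problem head on, or you let them move and must supply a stabilization argument; your statement that the needed consistency is ``bookkeeping'' ensured by ``the protection built into the feasibility step'' is circular, since that step was never constructed. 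In short, the outline identifies the right obstacle but does not overcome it; Chen and Yang's actual proof is a delicate inductive construction in which the new elements and the points they privately cover are chosen together, with explicit use of unboundedness in both directions, and no soft greedy argument of the kind sketched here is known to replace it.
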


\begin{theorem}[Chen, Yang \cite{CY}]\label{thm: CY2}
Let $W = \{w_1 < w_2 < \cdots \}$ be a set of integers and 
$$\overline{W} = \nnn \cut W = \{\overline{w_1} < \overline{w_2} < \cdots\}\,.$$
\begin{enumerate}
    \item[(i)] If $\limsup\limits_{i \to \infty} w_{i+1} - w_i = \infty$, then $W$ has a minimal additive complement.
    \item[(ii)] If $\lim\limits_{i \to \infty} \overline{w_{i + 1}} - \overline{w_i} = \infty$, then $W$ does not have a minimal additive complement.
\end{enumerate}
\end{theorem}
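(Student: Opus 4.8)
The key to both parts is a \emph{local} reformulation of minimality via private witnesses. Given a complement $C$ to $W$ and an element $c \in C$, removing $c$ destroys the covering of exactly those integers $g$ for which $c$ is the unique contributor, i.e. $g - c \in W$ but $g - c' \notin W$ for every $c' \in C \setminus \{c\}$. Call such a $g$ a \emph{private witness} for $c$. Thus $C \setminus \{c\}$ is still a complement precisely when $c$ has no private witness, and $C$ is a minimal complement if and only if \emph{every} $c \in C$ admits a private witness. Consequently, part (i) reduces to \emph{constructing} a complement in which every element has a private witness, while part (ii) reduces to showing that \emph{every} complement contains a removable element (one with no private witness). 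Throughout I may assume $W$ is bounded below, since otherwise Theorem \ref{thm: CY1} already applies; bounded-belowness forces any complement $C$ to be unbounded below, a fact I will use in part (ii).

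For part (i), the plan is a greedy two-sided construction of $C$ that exploits the arbitrarily large gaps. First I fix a subsequence of indices $n_1 < n_2 < \cdots$ with $w_{n_k + 1} - w_{n_k} \to \infty$, so that $W$ omits an interval of length $\to\infty$ to the right of each $w_{n_k}$. I then enumerate the integers to be covered, say in the order $0, 1, -1, 2, -2, \ldots$, maintaining a finite partial set $C$ together with a private witness assigned to each of its elements. Whenever I reach an integer $t$ not yet lying in $C + W$, I am forced to adjoin some $c \in t - W$; the point is to choose this $c$ so that it can be equipped with a private witness $g = c + w^\ast$ lying so deep inside one of the large gaps of $W$ that $g - c'$ falls in that gap — hence outside $W$ — for every other element $c'$ that has been or will ever be added. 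Because the gaps grow without bound, I can always reserve such a witness beyond the span of all elements used so far, and future elements (chosen to cover integers progressively farther out) can never reach back into it. The routine bookkeeping is to verify that this choice is always available while the running union $C + W$ still exhausts $\zzz$ in the limit.

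For part (ii), the hypothesis $\overline{w_{i+1}} - \overline{w_i} \to \infty$ says that $\overline{W} = \nnn \setminus W$ is sparse — it has gaps tending to infinity and hence density zero — equivalently that $W$ contains arbitrarily long runs of consecutive integers. The plan is to assume for contradiction that some complement $C$ is minimal and extract a contradiction from the resulting abundance of private witnesses. If $c \in C$ has private witness $g$, then writing $\phi(c) = g - c \in W$, the privacy condition against every element $c' \in C$ below $c$ forces $\phi(c) + (c - c') = g - c' \in \overline{W}$. Since $C$ is unbounded below, this produces an \emph{infinite} family of holes $g - \big(C \cap (-\infty, c)\big) \subseteq \overline{W}$, one for each predecessor of $c$. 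I would then play this against the covering requirement $C + W = \zzz$: each of these forced holes must itself be represented as some $c'' + w''$, and I aim to show that the density-zero set $\overline{W}$ simply cannot simultaneously absorb the forced-hole families of all elements of $C$ while $C$ still covers $\zzz$, thereby contradicting minimality.

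The main obstacle is precisely this last step in part (ii). The difficulty is that the privacy condition is entirely consistent \emph{locally}: a single private witness only requires a run of $W$ longer than the relevant gap in $C$, which the hypothesis grants, so no contradiction can come from examining one element in isolation. The contradiction must instead be genuinely \emph{global}, combining the sparsity of $\overline{W}$ with the covering condition across all elements of $C$ at once, and it must apply uniformly to an arbitrary, possibly irregular complement $C$. Organizing this interplay — quantifying how many holes the minimality assumption forces versus how few $\overline{W}$ can supply given that $C+W=\zzz$ — is where the real work lies; part (i), by contrast, is mostly a matter of executing the witness-reservation construction carefully.
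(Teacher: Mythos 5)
A preliminary remark: the paper does not actually prove this statement --- it is quoted as background from Chen and Yang \cite{CY}, and the paper itself proves only the dual statement (Theorem \ref{thm: dualCY}) by different means. So I am comparing your proposal against what a correct proof requires, and against the paper's closest analogues.

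For part (ii), you have the right first move but are missing the finishing idea, and that idea is not a density count. After translating so that $\min W = 0$, your observation is correct: if $g$ is a private witness for $c \in C$, then $g - c' \in \overline{W}$ for every $c' \in C$ with $c' < c$, i.e.\ $C \cap (-\infty,c) \subseteq g - \overline{W}$. The missing step is to apply this to \emph{two} elements. If $c_1 \neq c_2$ have private witnesses $g_1, g_2$, then necessarily $g_1 \neq g_2$ (privacy of $g_1$ forces $g_1 \notin c_2 + W$, while $g_2 \in c_2 + W$), and $C \cap (-\infty, \min(c_1,c_2)) \subseteq (g_1 - \overline{W}) \cap (g_2 - \overline{W})$. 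Each element $x$ of this intersection yields two elements $g_1 - x$, $g_2 - x$ of $\overline{W}$ at distance exactly $|g_1 - g_2|$; since the gaps of $\overline{W}$ tend to infinity, only finitely many pairs of elements of $\overline{W}$ lie at any fixed distance, so the intersection is finite and $C$ is bounded below --- contradicting the fact, which you noted, that any complement to a bounded-below set is unbounded below. This two-witness rigidity is exactly the mechanism of the paper's own proof of Theorem \ref{thm: long runs not MAC}, in dual form: there, two dependent elements $z, z'$ and infinitely many $w \in W$ produce infinitely many pairs of $\overline{C_+}$ at the fixed distance $z' - z$. By contrast, I do not see how your global density plan closes: each ``forced-hole family'' $g_c - \left(C \cap (-\infty,c)\right)$ sits inside $\overline{W}$ with no quantitative strain however sparse $\overline{W}$ is; sparseness only bites when two translates of $\overline{W}$ are played against each other. (Also, your parenthetical ``equivalently, $W$ contains arbitrarily long runs'' is not an equivalence: long runs give only $\limsup$ of the gaps of $\overline{W}$ infinite, strictly weaker than the hypothesis, and under such weakenings the conclusion is essentially the paper's open Conjecture \ref{cor: extendedCY2}.)

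Part (i) also has a genuine gap: the one claim doing all the work is false as stated, namely that ``future elements \ldots\ can never reach back into'' a reserved witness. Since $W$ is bounded below, any complement is unbounded below; and since $W$ is unbounded above, the translate $c'' + W$ of each far-left element $c''$ reaches arbitrarily far to the right. Such elements are unavoidable even just for covering, because finitely many translates of $W$ can never cover a half-line $[M,\infty)$ when $W$ has unbounded gaps. So every reserved witness $g$ imposes a nontrivial constraint $c'' \notin g - W$ on every later element, and the construction must show that covering the next target $t$ is compatible with avoiding the finitely many forbidden sets $g_j - W$ reserved so far, i.e.\ that there exists $w \in W$ with $w + (g_j - t) \notin W$ for all $j$. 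This is easy when the differences $g_j - t$ all have one sign (use the start, resp.\ end, of a sufficiently long gap), but it can be unsolvable with mixed signs: for $W = \bigcup_k \{M_k, M_k + 1\}$ with $M_k$ growing rapidly, no $w \in W$ satisfies both $w + 1 \notin W$ and $w - 1 \notin W$. Organizing the order of targets and the placement of witnesses so that these simultaneous-avoidance problems remain solvable is the actual content of a proof of (i), and it is what your sketch omits. Note also why this is genuinely harder than the paper's dual construction (Lemma \ref{lem: large gaps MAC}): there, only the elements of the \emph{given} set need witnesses, and the constructed set carries no minimality burden, whereas your construction is self-referential --- every element you add needs its own witness protected against all past and future elements.
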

\endgroup
\setcounter{theorem}{0}

Kwon \cite{Kwo} initiated the study of the dual analogue
to Nathanson's question, asking which sets of integers arise as minimal additive complements.  Kwon showed that finite sets of integers arise as minimal additive complements and suggested looking at sets of the types that Chen and Yang investigated in \cite{CY}.  While there are simple counterexamples to the exact dual analogue of Theorem \ref{thm: CY1}, e.g., the nontrivial cofinite sets of integers are bounded neither above nor below and yet do not arise as minimal additive complements, Kwon \cite{Kwo} conjectured that the dual analogue to Theorem \ref{thm: CY2} holds.  We confirm this conjecture
with
the following theorem.  

\begin{theorem}\label{thm: dualCY}
Let $C = \{c_1 < c_2 < \cdots\}$ be an infinite set of natural numbers and 
$$\overline{C} = \nnn \cut C = \{\overline{c_1} < \overline{c_2} < \cdots\}\,.$$
\begin{enumerate}
    \item[(i)] If $\limsup\limits_{i \to \infty} c_{i+1} - c_i = \infty$, then $C$ arises as a minimal additive complement.
    \item[(ii)] If $\lim\limits_{i \to \infty} \overline{c_{i + 1}} - \overline{c_i} = \infty$, then $C$ does not arise as a minimal additive complement.
\end{enumerate}
\end{theorem}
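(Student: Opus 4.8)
The plan is to prove the two parts by quite different arguments. For part (ii) I would argue by contradiction: suppose $C$ is a minimal additive complement to some $W\subseteq\mathbb{Z}$. Minimality says that every $c\in C$ has a \emph{private witness}, an integer $n_c$ with $n_c\in c+W$ but $n_c\notin(C\setminus\{c\})+W$; equivalently $(n_c-W)\cap C=\{c\}$, where $w_c:=n_c-c\in W$. Since $C\subseteq\mathbb{N}$ is bounded below while $C+W=\mathbb{Z}$, the set $W$ is unbounded below. The key local observation is that for any $c$ and any $w\in W$ with $w\le n_c$ and $w\neq w_c$, the nonnegative integer $n_c-w$ cannot lie in $C$ (otherwise it would be a second element of $(n_c-W)\cap C$), so $n_c-w\in\overline{C}$; hence
\[
W\cap(-\infty,n_c]\ \subseteq\ \{w_c\}\cup\bigl(n_c-\overline{C}\bigr).
\]

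To finish (ii), note that distinct elements of $C$ cannot share a witness (a common value $n$ would lie in $(n-W)\cap C$ for two different elements), so $c\mapsto n_c$ is injective, and I may pick $c,c'\in C$ with $n_c<n_{c'}$ and set $\delta:=n_{c'}-n_c>0$. Since $W$ is unbounded below, $W\cap(-\infty,n_c]$ is infinite, so by the displayed inclusion there are infinitely many holes $s\in\overline{C}$ with $n_c-s\in W$. Each such $n_c-s$ lies in $W\cap(-\infty,n_{c'}]$, so applying the inclusion for $c'$ and discarding the at most one value equal to $w_{c'}$ gives $n_c-s=n_{c'}-h$ for some $h\in\overline{C}$, i.e.\ $s+\delta\in\overline{C}$. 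Thus infinitely many holes $s$ satisfy $s+\delta\in\overline{C}$; but $\overline{c_{i+1}}-\overline{c_i}\to\infty$ forces all sufficiently large consecutive holes to be more than $\delta$ apart, so for large $s$ the integer $s+\delta$ lies strictly between $s$ and the next hole and hence belongs to $C$ — a contradiction. I expect this to be the easier direction.

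For part (i) the difficulty is reversed: covering is free, since $c_i\to\infty$ already gives $C+\{0,-1,-2,\dots\}=\mathbb{Z}$, so the entire task is to build a complement $W$ in which \emph{every} $c_i$ is essential, i.e.\ has a private witness $n_i$ with $(n_i-W)\cap C=\{c_i\}$. A clean sufficient condition is $0\in W$ together with $W\cap(C-C)=\{0\}$: then the choice $w=0$ makes $n_i=c_i$ a private witness for all $i$ at once. When $C-C\neq\mathbb{Z}$ (for instance when $C$ is lacunary) this can be arranged by selecting, for each integer $n$, a covering element of $n-C$ that avoids the difference set, and this also yields a co-minimal pair. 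However, a set with arbitrarily large gaps may still contain arbitrarily long runs (e.g.\ $C=\bigcup_k[10^k,10^k+k]$), forcing $C-C=\mathbb{Z}$; there the uniform trick has no room and the large gaps must be exploited directly.

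For the general construction I would use $\limsup(c_{i+1}-c_i)=\infty$ to cut $C$ into consecutive finite blocks $B_1,B_2,\dots$ separated by gaps tending to infinity, and build $W$ greedily while interleaving two kinds of tasks: covering each integer $n$ (by adjoining some carefully chosen element of $n-C$) and reserving, for each $c\in B_k$, a private witness placed far out in a fresh region. The point is that at any finite stage $w_c$ need only avoid the finitely many translates $n_c-(C\setminus\{c\})$ and each witness location $n_i$ need only be chosen beyond the current finite data, and an unbounded gap leaves unbounded room to make such a choice. I expect the main obstacle to be exactly this simultaneous bookkeeping across infinitely many blocks: showing that each newly added element of $W$ neither destroys the privacy of an already reserved witness nor is itself forced to be re-covered later. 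Verifying that the arbitrarily large gaps always leave an admissible, conflict-free choice — matching the quantitative largeness of the gaps against the block sizes — is where the real work lies, rather than in the covering or the single-block analysis. A construction achieving this conflict-freeness symmetrically should also make each element of $W$ essential, yielding the promised co-minimal pair.
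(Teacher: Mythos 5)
Your proof of part (ii) is correct, and it takes a more direct route than the paper does. The paper obtains (ii) as the special case $\ell = 1$ of a stronger result (Theorem \ref{thm: long runs not MAC}) valid for sets that are not bounded below; for that generality it must first rule out finite complements (Lemma \ref{lem: not MAC to finite}) and then argue separately that $W$ contains infinitely many negative elements. In your setting $C \subseteq \mathbb{N}$, so $W$ is unbounded below immediately from $C + W = \mathbb{Z}$, and your two-witness argument --- the inclusion $W \cap (-\infty, n_c] \subseteq \{w_c\} \cup (n_c - \overline{C})$ applied at two witnesses $n_c < n_{c'}$ to produce infinitely many pairs of holes at distance exactly $\delta = n_{c'} - n_c$ --- gives a clean contradiction with $\overline{c_{i+1}} - \overline{c_i} \to \infty$. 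This is a legitimate simplification for the statement at hand, though it does not recover the paper's stronger theorem.

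Part (i), however, contains a genuine gap. Everything from ``for the general construction'' onward is a plan rather than a proof, and it halts exactly at the step where the content of the theorem lies. The unresolved point is the one you flag yourself: since $C \subseteq \mathbb{N}$, the complement $W$ must be unbounded below, so you keep adding very negative covering elements forever, and each one must avoid $n_c - (C \setminus \{c\})$ for every previously reserved witness $n_c$. These forbidden sets are not ``finitely many translates'' in any useful sense: each is a translate of $-C$ (minus a point), hence infinite and unbounded below, and their union grows as more witnesses are reserved. In the hard case you correctly identify, where $C - C = \mathbb{Z}$ (e.g.\ $C = \bigcup_k [10^k, 10^k + k]$), no soft counting argument is available; one needs a quantitative construction that plays the large gaps of $C$ off against these accumulating constraints. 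That construction is the heart of the paper's proof: Lemma \ref{lem: filling negatives} covers $\mathbb{Z}_{\leq 0}$ by a set $D \subseteq \mathbb{Z}_{<0}$ chosen (via the auxiliary function $h$) so that the partial sumset $C + D$ still has arbitrarily large gaps and every integer is finitely $(C,D)$-represented; Lemma \ref{lem: large gaps MAC} then assigns to $c_i$ the minimal yet-uncovered integer $z_i$ as its witness and adds $\{z_i - c_i\} \cup ([1,k_i] + z_i - c_1)$ to $W$, engineered so that all later additions produce sums only in $(z_i, \infty)$ and hence can never re-cover an earlier witness. Your sketch would have to be developed into something of exactly this type; as written, ``an unbounded gap leaves unbounded room to make such a choice'' is the assertion that needs proving, not an argument for it.
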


A pair of sets $A,B\subseteq G$ is a \emph{co-minimal pair} if each set is a minimal additive complement to the other.  Observe that a set belonging to a co-minimal pair both has and arises as a minimal additive complement.  
Biswas and Saha \cite{BS2} recently studied lacunary sequences that belong to a co-minimal pair in the integers.  A {\it lacunary sequence} is a subset of positive integers $\{x_n\}_{n \in \N}$ such that $\frac{x_{n + 1}}{x_n} \geq \lambda $ for some real constant $\lambda > 1$ and all $n \in \N$.  Biswas and Saha showed that ``a majority'' of lacunary sequences arise as minimal additive complements (see Section \ref{sec: aperiodic} for relevant discussion).  Motivated by our proof of Kwon's conjecture, we show in Theorem \ref{thm: co-minimal strengthening} that sets satisfying Condition (i) of Theorem \ref{thm: dualCY} belong to a co-minimal pair.  Since lacunary sequences satisfy this condition, we obtain the following corollary, which completes the classification of lacunary sequences belonging to co-minimal pairs.

\begin{corollary}\label{cor: lacunary}
Every lacunary sequence belongs to a co-minimal pair in the integers.
\end{corollary}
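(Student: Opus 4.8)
The plan is to deduce this immediately from Theorem~\ref{thm: co-minimal strengthening} once we verify that every lacunary sequence satisfies Condition~(i) of Theorem~\ref{thm: dualCY}. Since the construction of an explicit co-minimal partner is exactly what Theorem~\ref{thm: co-minimal strengthening} provides, the only work left here is to check that hypothesis, and I expect no serious obstacle: the entire content is the mild translation of the multiplicative lacunarity condition into the additive gap condition.

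First I would record that a lacunary sequence $\{x_n\}_{n\in\N}$ is, after relabeling by its underlying set, a strictly increasing sequence of positive integers. Indeed, the condition $x_{n+1}/x_n \geq \lambda > 1$ forces $x_{n+1} > x_n$, so writing $C = \{c_1 < c_2 < \cdots\}$ for the underlying set of positive integers introduces no ambiguity. Iterating the same inequality gives $c_n \geq c_1 \lambda^{n-1}$, so in particular $c_n \to \infty$.

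Next I would estimate the consecutive gaps. From $c_{n+1} \geq \lambda c_n$ we obtain
$$c_{n+1} - c_n \geq (\lambda - 1)c_n \geq (\lambda - 1)c_1 \lambda^{n-1},$$
and the right-hand side tends to infinity since $\lambda > 1$. Hence $\lim_{n\to\infty}(c_{n+1}-c_n) = \infty$, which in particular yields $\limsup_{n\to\infty}(c_{n+1}-c_n) = \infty$. Thus $C$ satisfies Condition~(i) of Theorem~\ref{thm: dualCY}.

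Finally, applying Theorem~\ref{thm: co-minimal strengthening} to $C$ produces a set $B$ for which $\{C, B\}$ is a co-minimal pair, completing the proof. The only point deserving care is the geometric-growth estimate for the gaps, and this is settled at once by the bound $c_n \geq c_1 \lambda^{n-1}$ established above.
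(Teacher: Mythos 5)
Your proof is correct and takes essentially the same route as the paper: the paper derives Corollary~\ref{cor: lacunary} directly from Theorem~\ref{thm: co-minimal strengthening}, exactly as you do. The paper treats the verification of Condition~(i) as immediate, so your explicit estimate $c_{n+1}-c_n \geq (\lambda-1)c_1\lambda^{n-1} \to \infty$ merely fills in a detail the paper leaves to the reader.
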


In Section \ref{sec: finite}, we bound the upper and lower Banach density of \emph{syndetic sets}, i.e., the sets of integers that have nonempty intersection with every interval of length $k$ for some $k \in \N$, arising as minimal additive complements to finite sets.  We then use these results to study \emph{eventually periodic} sets, as introduced by Kiss, S{\'a}ndor, and Yang in \cite{KSY}, arising as minimal additive complements.  In Section \ref{sec: eventually periodic}, we provide several necessary conditions (which can be checked in finitely many steps) for eventually periodic sets to arise as minimal additive complements.  We furthermore show that these conditions are sufficient for certain families of eventually periodic sets of upper Banach density $(m+1)/3$ with prime period $m \equiv 2 \mod 3$ to arise as minimal additive complements.

Theorems \ref{thm: CY1} and \ref{thm: CY2} show that the only subsets of integers that do not have a minimal additive complement are infinite subsets that are bounded below (or above) and have bounded gaps between consecutive elements. Such sets can be viewed as syndetic sets in $\N$.  Our results show that the subsets of integers that do not arise as minimal additive complements can look qualitatively more diverse. There is still a lot of room for progress toward answering both Nathanson's original question and Kwon's dual analogue; we conclude with several open questions and conjectures in Section \ref{sec: further}.

\section{Preliminaries}\label{sec: preliminaries}

Here we briefly discuss a few basic definitions and results relevant to the rest of the paper. Observe that arising as a minimal additive complement is a translation-invariant and reflection-invariant property of a set of integers.  Thus, to determine which sets of integers that are bounded above or below arise as minimal additive complements, it is enough to consider subsets of $\N$.  

\begin{definition}\label{dependenteltdef}
Let $C$ be an additive complement to $W$.  We say $z \in \zzz$ is {\it dependent} on $c \in C$ if $z \not\in (C \cut \{c\}) + W$.  In this case we say $c$ {\it has} a dependent element (with respect to the sum $C + W$).  
\end{definition}

Observe that $C$ is a minimal additive complement to $W$ if and only if $C$ is an additive complement to $W$ and furthermore every $c \in C$ has a dependent element.  

Given $A,B \subseteq \Z$, we say $z \in A + B$ is {\it finitely $(A,B)$-represented} if the set 
$$\{(a,b) \in A \times B : a + b = z\}$$
is finite.  Biswas and Saha showed that if a pair of sets $A,B \subseteq \Z$ satisfies the property that every $z \in \Z$ is finitely $(A,B)$-represented, then $A$ and $B$ can be reduced to a co-minimal pair.  While we present their result in the context of the integers, they worked a more general setting of countable subsets in any group.

\begin{lemma}[Biswas, Saha \cite{BS2}]\label{lem: finite rep}
Suppose $A,B \subseteq \Z$ are complements and every $z \in \Z$ is finitely $(A,B)$-represented.  Then there exists $A' \subseteq A$ such that $A'$ is a minimal additive complement to $B$. Furthermore, there exists $B' \subseteq B$ such that $(A',B')$ form a co-minimal pair.
\end{lemma}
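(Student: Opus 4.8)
The plan is to obtain both parts from a single reduction procedure: strip away redundant elements of $A$ (resp.\ $B$), using the finite-representation hypothesis as the one device that guarantees the complement property survives passage to a sub-family.

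First I would reduce $A$ to $A'$. Consider the poset $\mathcal{P}$ of all $C \subseteq A$ with $C + B = \Z$, ordered by reverse inclusion, so that a maximal element of $\mathcal{P}$ is precisely a minimal additive complement to $B$ contained in $A$. To apply Zorn's lemma the only thing to verify is that every chain $\{C_\alpha\}$ has an upper bound, for which the natural candidate is $C = \bigcap_\alpha C_\alpha$; I must check $C + B = \Z$. Fix $z \in \Z$ and let $R_z = \{a \in A : z - a \in B\}$, which is \emph{finite} by hypothesis. Each $C_\alpha$ is a complement, so it meets $R_z$, and hence $\{C_\alpha \cap R_z\}$ is a totally ordered family of nonempty subsets of the finite set $R_z$; such a family attains a least value, so $C \cap R_z = \bigcap_\alpha (C_\alpha \cap R_z) \neq \emptyset$ and $z \in C + B$. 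Thus $C \in \mathcal{P}$ bounds the chain, and Zorn yields $A' \subseteq A$ that is a minimal additive complement to $B$. (Since $A$ is countable, one may alternatively enumerate $A = \{a_1, a_2, \dots\}$ and delete $a_i$ greedily whenever the remaining set is still a complement; the finiteness of $R_z$ again forces the surviving set to be a complement, because the largest-indexed member of $R_z$ either survives or else its deletion leaves some earlier-indexed---hence already-decided and therefore permanent---member of $R_z$ covering $z$.)

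For the second part I first record what minimality buys. By the observation following Definition~\ref{dependenteltdef}, every $a \in A'$ has a dependent element $z_a$, i.e.\ $z_a \in A' + B$ but $z_a \notin (A' \setminus \{a\}) + B$. Writing $b_a = z_a - a \in B$, the identity $z_a = a + b_a$ is the \emph{unique} representation of $z_a$ over $A' \times B$: no $a'' \in A' \setminus \{a\}$ represents $z_a$, and once the first coordinate is pinned to $a$ the second is determined. I then reduce $B$ to $B'$ by the identical procedure applied to the complement $A' + B = \Z$; the finite-representation hypothesis still holds over $A' \times B$ because $A' \subseteq A$, so I obtain $B' \subseteq B$ that is a minimal additive complement to $A'$.

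The main obstacle is that shrinking $B$ to $B'$ could in principle destroy the minimality of $A'$, since the witnesses $z_a$ were only certified against the larger set $B$; this is exactly the point where the uniqueness of representations does the work. Because $z_a = a + b_a$ is the only way to write $z_a$ over $A' \times B$, deleting $b_a$ would leave $z_a$ uncovered by $A' + B'$, so $b_a$ cannot be removed in any complement-preserving reduction and hence $b_a \in B'$. Consequently $z_a = a + b_a \in A' + B'$, while $z_a \notin (A' \setminus \{a\}) + B \supseteq (A' \setminus \{a\}) + B'$ shows $z_a$ is still dependent on $a$. Thus every $a \in A'$ retains a dependent element with respect to $A' + B'$, so $A'$ is a minimal additive complement to $B'$; together with the minimality of $B'$ furnished by its construction, $(A', B')$ is a co-minimal pair.
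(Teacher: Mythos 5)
Your proof is correct, and in fact the paper itself contains no proof of this lemma to compare against: the lemma is imported from Biswas and Saha \cite{BS2} (their Lemma 3.3), so your argument serves as a self-contained substitute for the citation. The first half is the standard Zorn's lemma argument and your key verification is sound: for a chain $\{C_\alpha\}$ the sets $C_\alpha \cap R_z$ form a totally ordered family of nonempty subsets of the finite set $R_z$, hence have nonempty intersection, so chain intersections remain complements, and maximality under reverse inclusion is exactly minimality as a complement. One remark on the second half: the ``main obstacle'' you identify is not actually an obstacle. Since $B' \subseteq B$, monotonicity of Minkowski sums gives $(A' \setminus \{a\}) + B' \subseteq (A' \setminus \{a\}) + B \neq \Z$ for every $a \in A'$, so the minimality of $A'$ as a complement to $B'$ follows at once from its minimality as a complement to $B$ together with the covering property $A' + B' = \Z$, which your construction of $B'$ guarantees; shrinking the partner set can only destroy covering, never minimality. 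Your uniqueness-of-representation argument (that $z_a = a + b_a$ is the unique representation of $z_a$ over $A' \times B$, forcing $b_a \in B'$) is correct, but it is a detour rather than a necessary step. With that simplification, the proof is exactly the natural two-step reduction: Zorn applied to $A$ against $B$, then Zorn applied to $B$ against $A'$.
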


We often switch between studying subsets of integers and their projections to finite cyclic groups. For a set $C \subseteq \Z$ and $z \in Z$, we denote by $C + z$ the Minkowski sum of $C$ and the singleton $\{z\}$. We denote the standard projection of $C$ to $\Z/m\Z$ by $C_{(m)}$.  We view $C_{(m)} + m\Z$ as a subset of $\Z$ by taking the inverse image of $C_{(m)}$ under the standard projection.

\section{Sets with Large Gaps or Intervals}\label{sec: aperiodic}
In this section, we investigate when sets of natural numbers containing arbitrarily large gaps or having a specified interval structure arise as minimal additive complements in the integers.  Chen and Yang \cite{CY} made partial progress toward determining when such sets have minimal additive complements; their results are summarized in Theorem \ref{thm: CY2}. We prove the dual analogue to their result, Theorem \ref{thm: dualCY}, settling a conjecture of Kwon \cite{Kwo}.  While the statement of Theorem \ref{thm: dualCY} is precisely the dual to that of Theorem \ref{thm: CY2}, the proof does not follow analogously.  In fact, we prove two stronger results, as detailed in Theorems \ref{thm: co-minimal strengthening} and \ref{thm: long runs not MAC}.  

\begin{theorem}\label{thm: co-minimal strengthening}
Let $C = \{c_1 < c_2 < \cdots\}$ be an infinite set of natural numbers.  If $\limsup\limits_{i \to \infty} c_{i+1} - c_i = \infty$, then $C$ belongs to a co-minimal pair in the integers.
\end{theorem}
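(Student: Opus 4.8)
The plan is to produce a single set $W \subseteq \Z$ for which $C + W = \Z$, every integer is finitely $(C,W)$-represented, and $C$ is \emph{already} a minimal additive complement to $W$, and then to invoke Lemma \ref{lem: finite rep}. The role of the last requirement is subtle but decisive: Lemma \ref{lem: finite rep} only guarantees a minimal complement $C' \subseteq C$, but if $C$ is already minimal then no proper subset of $C$ is a complement, so the only complement contained in $C$ is $C$ itself, forcing $C' = C$. The ``furthermore'' clause then supplies $W' \subseteq W$ with $(C, W')$ a co-minimal pair, which is exactly the conclusion sought. Since belonging to a co-minimal pair is translation-invariant, I normalize $c_1 = 0$, so $C \subseteq \N$.

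For the covering and finite-representation requirements I would split $W = W^+ \cup W^-$ by sign. Nonnegative elements are harmless for finite representation: if $w \ge 0$ and $c + w = z$ then $c = z - w \le z$, and only finitely many elements of $C$ lie below $z$. I therefore take $W^+ \subseteq \Z_{\ge 0}$ large enough to cover every nonnegative integer --- e.g. $0 \in W^+$ together with $t - c_{\mathrm{prev}}(t)$ for each gap integer $t \ge 0$, where $c_{\mathrm{prev}}(t) = \max\{c \in C : c \le t\}$ --- so that $C + W^+ \supseteq \Z_{\ge 0}$ at no finite-representation cost. The negative integers are where the hypothesis is used. Invoking $\limsup_{i}(c_{i+1} - c_i) = \infty$, I select indices $i_1 < i_2 < \cdots$ whose gaps satisfy $d_{i_k} := c_{i_k + 1} - c_{i_k} \ge 2k$, and cover $-k$ via $-k = c_{i_k} + w_k$ with $w_k = -k - c_{i_k} \in W^-$. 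Finite representation of a fixed $z$ then reduces to bounding the pairs $(j,k)$ with $c_j - c_{i_k} = z + k$; for each $k$ at most one $j$ can occur, and once $2k > |z| + k$ the candidate $c_{i_k} + (z+k)$ lands strictly inside the gap $(c_{i_k}, c_{i_k+1})$ and hence is not in $C$. Thus only finitely many $k$ contribute, and every integer is finitely $(C,W)$-represented.

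The main obstacle is the third requirement: every $c_i$ must possess a dependent element with respect to the final sum $C + W$. The large gaps are again the key resource. The idea is to reserve a further sparse subsequence of enormous gaps, one hosting region per element $c_i$, and to place for each $i$ a dedicated element of $W$ producing an integer $z_i$ reachable only through $c_i$. Concretely, I would choose the hosting gaps recursively and far enough out --- a diagonalization against the countably many already-committed elements of $W$ --- that $z_i - c_j \notin W$ for every $j \ne i$, so that $z_i$ is dependent on $c_i$. The delicate bookkeeping is that these placements must simultaneously (i) not spoil one another's uniqueness, (ii) not reintroduce infinite representations, and (iii) coexist with the covering elements $W^+ \cup W^-$. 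Controlling all three at once is where the argument becomes genuinely technical, and it is precisely here that the unboundedness of the gaps --- supplying arbitrarily much room below any finite stage of the construction --- is indispensable.

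Once $W$ enjoys these three properties, Lemma \ref{lem: finite rep} applies with $A = C$ and $B = W$: finite representation is in hand, $C + W = \Z$, and $C$ is already minimal, so the minimal complement inside $C$ must be $C$ itself, yielding $W' \subseteq W$ with $(C, W')$ a co-minimal pair. Hence $C$ belongs to a co-minimal pair, as claimed.
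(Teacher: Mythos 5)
Your endgame matches the paper's: build a single $W$ with $C+W=\Z$, finite $(C,W)$-representation, and $C$ already minimal, then feed it to Lemma \ref{lem: finite rep} to extract $W'\subseteq W$ with $(C,W')$ a co-minimal pair; that final step is logically sound. The construction of $W$, however, has a fatal structural flaw, not merely deferred bookkeeping: you achieve full coverage of $\Z$ first and plan to install minimality afterwards, but adjoining elements to $W$ can never \emph{create} dependence for an integer that is already covered---it only enlarges the sets $(C\setminus\{c_i\})+W$ and hence can only destroy dependence. Since your $W^+\cup W^-$ already satisfies $C+(W^+\cup W^-)=\Z$, every dependence relation the $c_i$ will ever have must already hold at that moment; the ``dedicated elements'' cannot help. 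And concretely your covering precludes minimality: because the gaps of $C$ are unbounded, the set $\{t-c_{\mathrm{prev}}(t): t\ge 0,\ t\notin C\}$ contains \emph{every} positive integer (given $n$, take $t=c_i+n$ inside any gap of length greater than $n$), so your $W^+$ is all of $\Z_{\ge 0}$. With $c_1=0\in C$ this gives $(C\setminus\{c_i\})+W\supseteq c_1+\Z_{\ge0}=\Z_{\ge0}$ for every $i\ge2$, so any dependent element of such a $c_i$ must be negative. But your $W^-$ covers each negative integer $-k$ via the representation $c_{i_k}+w_k$, where $\{i_k\}$ is the subsequence of indices with gap at least $2k$. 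Hence for every $i\ge 2$ with $i\notin\{i_k\}$, every negative integer already has a representation avoiding $c_i$, so $c_i$ has no dependent element with respect to $W^+\cup W^-$, nor with respect to any superset. Under the hypothesis $\limsup (c_{i+1}-c_i)=\infty$ the subsequence $\{i_k\}$ is in general sparse (e.g.\ $C$ may have unit gaps outside a thin set of huge gaps), so infinitely many $c_i$ can never acquire dependent elements, and $C$ cannot be a minimal additive complement to your final $W$ no matter how the dedicated elements are placed.

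The missing idea is the invariant that drives the paper's proof: coverage and minimality must be engineered \emph{simultaneously}, keeping the sumset sparse while the negatives are filled. Lemma \ref{lem: filling negatives} covers $\Z_{\le 0}$ by negative elements $D$ while maintaining $g(C+D)=\infty$, so the sumset deliberately retains arbitrarily large gaps; Lemma \ref{lem: large gaps MAC} then covers the positive integers greedily, where at stage $i$ the least uncovered integer $z_i$ is covered by adjoining $z_i-c_i$ (so that the first and only representation of $z_i$ passes through $c_i$) together with a block of elements whose sumset with $C$ lies strictly beyond $z_i$; all later stages likewise only cover integers beyond $z_i$. Thus each $z_i$ remains dependent on $c_i$ permanently, and minimality is built into the covering itself---which is precisely what a cover-first, fix-minimality-later scheme cannot accomplish.
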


\begin{theorem}\label{thm: long runs not MAC}
Let $C$ be a set of integers such that $C_+ = C\cap \N = \{c_1 < c_2 < \cdots\}$ is infinite,  and let
$$\overline{C_+} = \nnn \cut C = \{\overline{c_1} < \overline{c_2} < \cdots\}\,.$$
If there exists $\ell > 0$ such that for all $m \in \nnn$ we have $\overline{c_{j}} - \overline{c_i} \notin [\ell,m]$ for sufficiently large $i$ and $j > i$, then $C$ does not arise as a minimal additive complement.
\end{theorem}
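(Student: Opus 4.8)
The plan is to prove the statement in the equivalent form: for \emph{every} $W$ with $C + W = \Z$, the set $C$ fails to be minimal, i.e.\ some $c \in C$ has no dependent element. So I assume for contradiction that $C$ is a minimal additive complement to some $W$, whence $C + W = \Z$ and every $c \in C$ has a dependent element. Write $\overline{C} = \Z \cut C$, so that $\overline{C} \cap \N = \overline{C_+}$. The engine of the proof is the following elementary consequence of Definition~\ref{dependenteltdef}: if $z_c$ is dependent on $c$, write $z_c = c + w_c$ with $w_c \in W$; then for every $w \in W \cut \{w_c\}$ we have $z_c - w \in \overline{C}$. Indeed, if $z_c - w \in C$ for some $w \neq w_c$, then $z_c - w \neq c$ and $z_c = (z_c - w) + w \in (C \cut \{c\}) + W$, contradicting dependence. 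I also record two features of the hypothesis. First, for each integer $\delta \geq \ell$ there are only finitely many pairs $i < j$ with $\overline{c_j} - \overline{c_i} = \delta$ (apply the hypothesis with $m = \delta$, which puts $\delta \in [\ell,m]$). Second, the hypothesis forces the consecutive gaps $\overline{c_{i+1}} - \overline{c_i}$ to be unbounded, since otherwise differences $\overline{c_j} - \overline{c_i}$ built up in bounded steps would be trapped in $[\ell, m]$ for suitable $m$ and arbitrarily large $i$; hence $C_+$ contains arbitrarily long runs of consecutive integers.

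I would first use the long runs to show that $W$ is infinite. Given $\rho$, pick a run $[a,b] \subseteq C_+$ of length $> 2\rho$ and let $c$ be its midpoint, so $[c - \rho, c + \rho] \subseteq C$. Writing $z_c = c + w_c$ for a dependent element, the dependence condition applied to the elements $c'' \in [c-\rho,c+\rho] \cut \{c\} \subseteq C$ gives $w_c + (c - c'') \notin W$, i.e.\ $W \cap [w_c - \rho, w_c + \rho] = \{w_c\}$. Thus $W$ has a $\rho$-isolated point for every $\rho$. But a finite set of size $\geq 2$ and diameter $D$ has no $\rho$-isolated point once $\rho > D$, and a singleton cannot complement $C \neq \Z$; hence $W$ is infinite.

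The main step is then a dichotomy on $W$, designed so that the elements produced by the key formula land in the positive (hence sparse) part $\overline{C_+}$. If $W$ is unbounded below, choose $\ell + 1$ elements of $C$; their dependent elements are distinct, so two of them, say $z_{c_0} < z_{c_1}$, satisfy $\delta := z_{c_1} - z_{c_0} \geq \ell$. For each of the infinitely many $w \in W$ with $w \to -\infty$ (and $w \notin \{w_{c_0}, w_{c_1}\}$), the formula gives $z_{c_0} - w,\, z_{c_1} - w \in \overline{C}$; for $w$ sufficiently negative these are large and positive, hence lie in $\overline{C_+}$, and they form a pair at the fixed distance $\delta \geq \ell$. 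This produces infinitely many such pairs with both coordinates tending to $\infty$, contradicting the first feature above. If instead $W$ is bounded below then, being infinite, it is unbounded above; pick $u_1 < u_2 < u_3$ in $W$ with consecutive gaps $\geq \ell$, so all three are pairwise $\geq \ell$ apart. Each $c \in C_+$ has a witness $w_c \geq \min W$, and $w_c$ coincides with at most one of $u_1,u_2,u_3$; by pigeonhole some fixed pair $\{u_i, u_j\}$ is avoided by $w_c$ for infinitely many $c \in C_+$. For those $c$ we get $z_c - u_i,\, z_c - u_j \in \overline{C}$, and since $z_c = c + w_c \geq c + \min W \to \infty$ they again lie in $\overline{C_+}$ and form a pair at the fixed distance $|u_i - u_j| \geq \ell$; letting $c \to \infty$ contradicts the hypothesis as before.

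The delicate point — and the reason the negative part of $C$, which is left completely unconstrained, cannot simply be translated away — is ensuring that the two elements $z_c - w$ supplied by the key formula are \emph{positive}, so that their difference is governed by the sparse structure of $\overline{C_+}$ rather than by the arbitrary set $\overline{C} \cap \Z_{<0}$. This is exactly what the dichotomy arranges: when $W$ reaches arbitrarily far below $0$ we hold $c$ fixed and send $w \to -\infty$, while when it does not we exploit that every witness then satisfies $w_c \geq \min W$ in order to send $c \to \infty$. A secondary point to verify is that distinct choices yield genuinely distinct pairs — this follows from injectivity of $c \mapsto z_c$ (uniqueness of representation of a dependent element) and of $w \mapsto z_c - w$ — so that "infinitely many pairs at a fixed difference" really does contradict the finiteness statement. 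Finally, the degenerate case where $\overline{C_+}$ is finite (cofinite $C$) is absorbed by the same argument, since there the target set $\overline{C_+}$ is bounded and the produced elements cannot tend to infinity within it.
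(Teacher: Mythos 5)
Your proof is correct, and while it runs on the same engine as the paper's --- the unique representation of a dependent element $z_c = c + w_c$, forcing $z_c - w \in \Z \setminus C$ for every $w \in W \setminus \{w_c\}$, and the production of infinitely many pairs of $\overline{C_+}$ at a fixed distance $\geq \ell$ tending to infinity --- it reaches the contradiction by a genuinely different route in two places. First, for the infinitude of $W$ the paper invokes a free-standing result (Lemma \ref{lem: not MAC to finite}: a set with $g(\Z \setminus C) = \infty$ is not a minimal complement to any finite set, proved via a covering argument $z + S \subseteq (C\setminus\{z\}) + S$), whereas you get it directly from the dependence condition: the witness $w_c$ of a midpoint of a long run must be $\rho$-isolated in $W$, which no finite set of size $\geq 2$ can sustain. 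Second, and more significantly, the paper proves the \emph{unconditional} structural fact that $W$ must contain infinitely many negative elements --- by fixing $w, w' \in W$ with $w - w' > \ell$, showing $\Z \setminus (C + \{w, w'\})$ is bounded above, and arguing that dependent elements of run-midpoints are trapped in that bounded set --- and then runs a single endgame sending $w \to -\infty$. You avoid this entirely with a dichotomy: if $W$ is unbounded below you run the same endgame (holding two dependent elements $z_{c_0}, z_{c_1}$ at distance $\geq \ell$ fixed and letting $w \to -\infty$), and if $W$ is bounded below you exploit unboundedness above, pigeonholing the witnesses $w_c$ against three pairwise-$\ell$-separated elements $u_1 < u_2 < u_3$ of $W$ and sending $c \to \infty$ so that $z_c = c + w_c \geq c + \min W$ pushes both $z_c - u_i$ and $z_c - u_j$ into $\overline{C_+}$. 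Your version is more self-contained and elementary (no auxiliary lemma, no covering step); the paper's buys the reusable Lemma \ref{lem: not MAC to finite} plus the stronger conclusion that \emph{every} minimal complement of such a $C$ has infinitely many negative elements. One small caveat: your closing remark that the case of finite $\overline{C_+}$ is ``absorbed'' overreaches, since $C = \Z$ \emph{is} a minimal additive complement to $\{0\}$ and your singleton exclusion explicitly uses $C \neq \Z$; the theorem's notation $\overline{C_+} = \{\overline{c_1} < \overline{c_2} < \cdots\}$ presupposes $\overline{C_+}$ infinite, so this degenerate case lies outside the statement anyway, and your argument does correctly dispose of the nontrivial cofinite sets.
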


Theorem \ref{thm: co-minimal strengthening} allows us to strengthen a recent result of Biswas and Saha \cite[Theorem A]{BS2}, stating that ``a majority'' of lacunary sequences belong to a co-minimal pair in $\Z$.  By ``a majority'', Biswas and Saha meant that their work handled lacunary sequences where $\lambda \geq 6$ and certain lacunary sequences where $\lambda > 3$.  We complete this classification by showing that all lacunary sequences arise as minimal additive complements (see Corollary \ref{cor: lacunary}).  Note that this result follows directly from Theorem \ref{thm: co-minimal strengthening}.

While Condition (ii) of Theorem \ref{thm: dualCY} does not allow gaps of length greater than $1$ to occur infinitely often, Theorem \ref{thm: long runs not MAC} considers sets with bounded gaps and containing increasingly long intervals.  However, we expect that the methods of Chen and Yang could be extended to show that sets of natural numbers satisfying the conditions of Theorem \ref{thm: long runs not MAC} do not have minimal additive complements, thus strengthening Theorem \ref{thm: CY2} (see Conjecture \ref{cor: extendedCY2}).

We now proceed to prove Theorems \ref{thm: dualCY}, \ref{thm: co-minimal strengthening}, and \ref{thm: long runs not MAC} along with several intermediate lemmas.  We begin by showing that for a set $C \subseteq \nnn$ satisfying Condition $(i)$ of Theorem \ref{thm: dualCY}, there exists $D \subseteq \nnn$ such that $C + D$ has arbitrarily large gaps, $C + D$ contains all the non-positive integers, and no integer can be represented as a sum in $C + D$ in infinitely many ways.

It is useful to have a measure of the largest size of a gap in $S$.  To this end, we define the function $g: \mathcal{P}(\Z) \to \N \cup \{\infty\}$.
\begin{definition}
Let $S$ be a set of integers of size at least $2$. We define $g(S) \in \N \cup \{\infty\}$ to be the supremum of the differences between consecutive elements of $S$. 
\end{definition}

\begin{lemma}\label{lem: filling negatives}
Let $C = \{c_1 < c_2 < \cdots\}$ be an infinite set of natural numbers.  If $g(C) = \infty$, then there exists a set $D \subseteq \zzz_{< 0}$ such that $C + D \supseteq \zzz_{\leq 0}$, $g(C + D) = \infty$, and every integer is finitely $(C,D)$-represented.
\end{lemma}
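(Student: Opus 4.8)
The plan is to build $D$ greedily, filling the non-positive integers one at a time from the top down, while exploiting the large gaps of $C$ to keep both the representation counts and the gap sizes of $C + D$ under control.

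First I would set up the greedy step. Suppose finitely many elements of $D$ have been chosen and $-n$ is the largest non-positive integer not yet lying in $C + D$. Since $g(C) = \infty$, I may pick an index $i = i(n)$, larger than all previously used indices, for which the gap $G_i := c_{i+1} - c_i$ is enormous relative to $n$ and to the data chosen so far (say $G_i - n$ exceeds the number of steps taken so far); this is where the hypothesis is first used. I then adjoin $d := -n - c_i$ to $D$, so that $c_i + d = -n$. Note $d < 0$ since $c_i > 0 \ge -n$, so $D \subseteq \zzz_{<0}$ as required, and the translate $C + d = \{c_m - c_i - n : m \ge 1\}$ covers $-n$ together with the deeper non-positive integers coming from $m \le i$, while its least positive element is $c_{i+1} - c_i - n = G_i - n$, which is large. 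Because the frontier $-n$ strictly descends and is never re-covered, every non-positive integer is eventually adjoined, giving $C + D \supseteq \zzz_{\le 0}$; this settles the first claim.

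Next I would verify finite representability. Fix $z \in \zzz$. A chosen element $d = -n - c_{i(n)}$ contributes a representation of $z$ only when $z - d = z + n + c_{i(n)} \in C$, and then exactly one, namely $c_m = z + n + c_{i(n)}$. By the choice of $i(n)$ the quantity $z + n$ lies strictly between $0$ and $G_{i(n)}$ for all sufficiently large $n$, so $z + n + c_{i(n)}$ falls inside the gap $(c_{i(n)}, c_{i(n)+1})$ and therefore avoids $C$. Hence only finitely many of the adjoined elements contribute, and each contributes at most one pair, so $z$ is finitely $(C,D)$-represented.

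The main obstacle is the remaining claim $g(C + D) = \infty$, i.e.\ producing arbitrarily long intervals of positive integers disjoint from $C + D$ (the non-positive integers being fully covered, every gap of $C+D$ must occur among the positives). The key structural observation is that each translate $C + d$ spills into the positive integers only from its least positive element $G_i - n$ onward, and these thresholds tend to infinity; consequently every bounded window of $\zzz_{>0}$ meets only finitely many translates. To exhibit a gap of length $L$ I would go far to the right, where the only translates that can intrude on the target window form a finite set with offsets $\{|d|\}$, and then invoke $g(C) = \infty$ a second time: a single gap of $C$ longer than $L$ plus the spread of these finitely many offsets yields, after subtracting the offsets, a length-$L$ subinterval that every active translate misses. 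Making this precise is delicate because the spread of the active offsets itself grows as one moves right, so the gap indices $i(n)$ must be chosen adaptively, large enough that a fresh gap of $C$ beyond the current frontier dominates the offsets of all translates that can reach back into it. Reconciling the descent-filling of the negatives with the survival of ever-larger gaps is the crux, and it is precisely here that the full strength of $g(C) = \infty$, namely arbitrarily large gaps occurring arbitrarily far out, is needed.
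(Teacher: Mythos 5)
Your construction is the same in outline as the paper's (greedily fill $\zzz_{\leq 0}$ from the top down, each time translating $C$ by $-n - c_i$ with $c_i$ sitting just before a large gap), and your arguments for the first two properties --- $C + D \supseteq \zzz_{\leq 0}$ and finite $(C,D)$-representation --- are sound. But the third property, $g(C+D) = \infty$, is not proved: you explicitly defer it (``making this precise is delicate\dots this is the crux''), and the circularity you name is exactly what your stated constraints cannot overcome. Requiring only that the threshold $G_{i(n)} - n$ exceed the number of steps taken guarantees that a \emph{fixed} window $[0,M]$ meets only finitely many translates, so $(C+D) \cap [0,M] = (C + \{d_1,\dots,d_K\}) \cap [0,M]$ for some finite $K$. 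A finite union of translates of $C$ does have arbitrarily large gaps, but those gaps may all lie beyond $M$, in the region where further translates become active, and nothing in your constraints ties the thresholds of later translates to the \emph{positions} of the gaps of the earlier finite unions. The quantitative mismatch is stark: to protect a gap located inside a window $[0,M]$ while covering the frontier $-n$, the newly added translate must jump from $-n$ clear past $M$, which requires its defining gap of $C$ to have length at least $M + n$; your constraint only provides a gap of length about $n$ plus the step count, and $M$ (the position out to which earlier translates have already scattered elements) can be astronomically larger than both. Nothing prevents each new translate, despite its large threshold, from landing its subsequent elements inside every previously surviving gap, so the construction as specified may well produce a cofinite sumset.

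The missing idea --- the heart of the paper's proof --- is to make designated gaps permanent by building the protection into the choice of each new element. In the paper's notation, at step $i$ one first finds an index $h(i)$ such that (a) $c_{h(i)+1} - c_{h(i)} \geq i$ and (b) the current finite sumset $C + D_{i-1}$ \emph{already} contains a gap of length $\geq i$ inside the window $[0, c_{h(i)+1})$; then, setting $t_i = c_{h(i)+1} - y_i$ (where $y_i$ is the frontier), the new element is taken to be $x_i = y_i - c_{h(t_i)}$, i.e.\ it uses a gap of $C$ of length at least $t_i$, which is calibrated to the window rather than to the step count. This forces the smallest element of $C + x_i$ above $y_i$ to be at least $y_i + t_i = c_{h(i)+1}$, and since $h$ is increasing, every later translate clears $[0, c_{h(i)+1})$ as well; the gap guaranteed by (b) therefore survives into $C + D$, giving $g(C+D) = \infty$. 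Your proposal needs this two-level, adaptively calibrated choice of index (or an equivalent reservation mechanism), not just thresholds tending to infinity, to be complete.
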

\begin{proof}
We proceed by constructing a sequence $\{D_i\}_{i \geq 0}$ of nested sets  satisfying $C + D_i \supseteq [-i,0]$.  Let $D_0 = \{-c_1\}$.  For $i \in \nnn$, define $y_i$ to be the maximum element of $\zzz_{\leq 0} \cut (C + D_{i-1})$.  Note that, by our inductive hypothesis, we have $y_i \leq -i$.  Define a function $h: \zzz_{\geq 0} \to \zzz_{\geq 0}$ by setting $h(0) = 0$, and for $i \in \nnn$ choose $h(i)$ to be the minimum natural number satisfying
\begin{enumerate}[(a)]
    \item $c_{h(i) + 1} - c_{h(i)} \geq i$,
    \item $g\left([0,c_{h(i)+1}) \cap (C + D_{i-1})\right) \geq i$, and
    \item $h(i) > h(i-1)$.
\end{enumerate}
Such an $h(i)$ must exist because $g(C + D_{i-1}) = \infty$, as $D_{i-1}$ is a finite set.  Let $t_i = c_{h(i) + 1} - y_i$ and $x_i = y_i - c_{h(t_i)}$.  We define $D_i = D_{i-1} \cup \{x_i\}$ and $D = \bigcup_{i \geq 0} D_i$.  Clearly, $D_i \supset D_{i-1}$.  Since $y_i \in C + D_i$, we have $C + D_i \supset (C + D_{i-1}) \cup \{y_i\} \supseteq [-i,0]$.  Thus $C + D \supset \zzz_{\leq 0}$.

Next we show that $g(C + D) = \infty$.   By construction, we have
\begin{align*}
   \min\{z \in C + x_i: z > y_i\} = x_i + c_{h(t_i) + 1} = y_i + c_{h(t_i) + 1} - c_{h(t_i)} \geq y_i + t_i = c_{h(i)+1} \,.
\end{align*}  
Since $h$ is an increasing function, for $i \leq j$ we have
$$\min\{z \in C + x_j: z > y_j\} \geq \min\{z \in C + x_j: z > y_i\} \geq c_{h(j) + 1} \geq c_{h(i) + 1}\,.$$
Hence, $[0,c_{h(i) + 1}) \cap (C + x_j) =\emptyset$ for $j \geq i$.  Combining this with assumption (2) above, we conclude that $g([0,c_{h(i) + 1})\cap (C + D))\geq i$ for all $i \in \nnn$.  Therefore, $g(C + D) = \infty$.

It remains to show that each $w \in C + D$ is finitely $(C,D)$-represented.  Suppose $w < 0$.  As mentioned above, we have $\min\{z \in C + x_i: z > y_i\} = c_{h(i) + 1}$.  Since $y_i \leq -i$, this implies $w \notin C + x_i$ for $i > -w$.  Hence all negative integers are finitely $(C,D)$-represented.  

Now suppose $w \geq 0$.  Note that $h(i) \geq i$ by Condition (c) above.  Thus, 
$$\min\{z \in C + x_i: z > 0\} \geq \min\{z \in C + x_i: z > y_i\} \geq c_{h(i) + 1} \geq c_{i+1} > i\,.$$
Therefore $w \notin C + x_i$ for $i > w$, so $w$ is finitely $(C,D)$-represented.
\end{proof}

Using Lemma \ref{lem: filling negatives}, we now show that any set of natural numbers containing arbitrarily large gaps between consecutive elements arises as a minimal additive complement.

\begin{lemma}\label{lem: large gaps MAC}
Let $C = \{c_1 < c_2 < \cdots\}$ be an infinite set of natural numbers.  If $g(C) = \infty$, then $C$ arises as a minimal additive complement to a set $W$ such that every integer is finitely $(C,W)$-represented.
\end{lemma}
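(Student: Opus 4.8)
The plan is to extend the set $D$ furnished by Lemma~\ref{lem: filling negatives} to a full additive complement $W = D \cup E$ of $C$, where $E \subseteq \zzz_{>0}$, in such a way that every $c_i \in C$ acquires a dependent element (so that $C$ is minimal) while finite $(C,W)$-representability is preserved. The set $D$ already covers $\zzz_{\leq 0}$, so every integer still to be covered is positive; moreover, since $C+D$ has arbitrarily large gaps, the positive integers outside $C+D$ occur in arbitrarily long runs. These runs, together with the arbitrarily large gaps of $C$ itself, provide the room the construction needs.

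A useful first observation is that finite representability will come for free once $E \subseteq \zzz_{>0}$. Indeed, fix $z \in \zzz$ and consider a representation $z = c + w$ with $c \in C$ and $w \in W$. If $w \in D$, then by the conclusion of Lemma~\ref{lem: filling negatives} only finitely many such $c$ occur; if $w \in E \subseteq \zzz_{>0}$, then $c = z - w < z$, so again only finitely many $c \in C$ are available, each determining $w = z - c$ uniquely. Hence every integer is finitely $(C,W)$-represented regardless of how $E$ is chosen.

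The construction itself proceeds in stages and performs two interleaved tasks. The covering task keeps track of the least positive integer not yet in $C+W$ and covers it by adjoining a shift $p - c$ (with $c \in C$, $c < p$) to $E$. The witness task assigns to each $c_i$ a dependent element: I choose a positive integer $z_i \notin C + D$ lying in a long run of consecutive integers outside $C+D$ and positioned just past a large gap of $C$, adjoin $w_i = z_i - c_i$ to $E$, and thereby cover $z_i$ via $c_i$. By Definition~\ref{dependenteltdef}, $z_i$ is dependent on $c_i$ precisely when $z_i$ has a unique representation; since $z_i \notin C+D$ there is no representation using $D$, and the only representations using $E$ have the form $z_i = c_j + e$ with $e = z_i - c_j > 0$, i.e.\ with $c_j < z_i$. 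Thus it suffices to forbid the finite set $\{z_i - c_j : c_j \in C,\ c_j < z_i,\ j \neq i\}$ from ever entering $E$, and to require every later addition to $E$ to avoid the (always finite) union of these forbidden sets.

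The main obstacle is the tension between the two tasks: an element adjoined to $E$ in order to cover a distant positive integer may equal one of the protected values $z_i - c_j$, creating a second representation of a witness $z_i$ and destroying its uniqueness, whereas protecting too many values could block the only shifts available to cover some small positive integer. The crux of the argument is to order the stages and to choose each witness $z_i$ far enough to the right, immediately following a sufficiently large gap of $C$, that the forbidden values it generates lie above the current frontier of covered integers and so never interfere with shifts needed earlier; the arbitrarily large gaps of $C$ and of $C+D$ are exactly what guarantee a fresh, conflict-free region is always available for the next witness. Carrying out this bookkeeping yields simultaneously $C + W = \zzz$, a dependent element for every $c_i$, and finite $(C,W)$-representability, which is the assertion of the lemma.
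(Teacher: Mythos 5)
Your starting point and your first observation are sound: beginning from Lemma \ref{lem: filling negatives} is exactly what the paper does, and your remark that finite $(C,W)$-representability is automatic once all later additions are positive is correct (and is in fact cleaner than the paper's own argument for that part). The genuine gap is at what you yourself call the crux: you assert, but do not prove, that the covering task and the witness task can be interleaved without conflict. The specific mechanism you offer --- place each witness $z_i$ far to the right, just past a large gap of $C$, so that its forbidden values ``lie above the current frontier of covered integers and so never interfere with shifts needed earlier'' --- only protects covering steps that have \emph{already} been performed. But the covering task never terminates: every integer near and beyond each witness must still be covered at some later stage, and for those targets $p$ the available shifts $p - c$ land exactly in the range of the forbidden values. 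Whether an admissible shift then exists is a question about the additive structure of $C$, not about the ordering of stages: covering $p$ by a positive shift while respecting the forbidden set of a single witness $z$ is possible if and only if $\left(C \cap (0,p)\right) + (z-p) \not\subseteq C$, up to one exceptional element. This containment can genuinely hold: for $C$ the set of positive integers whose base-$4$ digits are all $0$ or $1$ (which satisfies $g(C)=\infty$), one has $\left(C \cap (0,4^n)\right) + 4^n \subseteq C$, so for \emph{any} witness $z$ the targets $p = z - 4^n$ with $p \le 4^n$ have essentially all of their positive covering shifts forbidden. A separate unaddressed point: your witness placement needs, at every stage, an uncovered integer in a suitable position, but the long runs of $\Z \setminus (C+D)$ produced by Lemma \ref{lem: filling negatives} need not sit inside large gaps of $C$ (since $0 \notin D$, elements of $C$ itself may lie in those runs), and later covering additions shrink the uncovered set; so ``a fresh, conflict-free region is always available'' also requires proof.

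For contrast, the paper dissolves this tension rather than managing it. There the witness and the covering target are the same integer: $z_i$ is the \emph{least} uncovered element, it is covered only by the new shift $z_i - c_i$, and simultaneously one adjoins the entire interval of shifts $[1,k_i] + z_i - c_1$, where $k_i = g(C \cap [c_1,c_{i+1}])$. Because $[1,k_i] + (C \cap [c_1,c_{i+1}])$ is an interval, this single stage covers all of $(-\infty,\, c_{i+1} - c_1 + k_i + z_i)$ while every new sum other than $z_i$ itself exceeds $z_i$. Consequently the next least uncovered element satisfies $z_{i+1} \ge c_{i+1} + k_i + z_i$, which forces even the smallest sum produced at stage $i+1$, namely $c_1 + z_{i+1} - c_{i+1}$, to exceed $z_i$. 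Hence no later addition can ever create a second representation of an earlier witness, and the forbidden-set bookkeeping that your argument hinges on never has to be confronted at all. To complete your proof you would need either this interval-of-shifts device (at which point you have reproduced the paper's argument) or a genuinely different mechanism together with a proof that the greedy covering can never be blocked; the latter is precisely what is missing.
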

\begin{proof}
We proceed by constructing a sequence $\{W_i\}_{i \geq 0}$ of nested sets satisfying $g(C + W_{i}) = \infty$. By Lemma \ref{lem: filling negatives}, there exists a set $D \subseteq \Z_{<0}$ such that $C + D \supseteq \Z_{\leq 0}$ and $g(C + D) = \infty$.  Set $W_0 = D$.  Let $z_i \in \nnn$ be the minimum element of $\zzz \cut (C + W_{i-1}) \subseteq \nnn$.    Set $k_i = g(C \cap [c_1,c_{i+1}])$, and observe that $[1,k_i] +[c_1,c_{i+1}) = [c_1 + 1, c_{i+1} + k_i)$.  We define 
$$W_{i} = W_{i-1} \cup \{z_i - c_i\} \cup ([1, k_i] + z_i - c_1).$$

Thus
\begin{align*}
    C + W_i &= (C + W_{i-1}) \cup (C + z_i  - c_i) \cup ((C + [1, k_i]) + z_i - c_1)\\
    &= (-\infty, z_i) \cup (C + z_i  - c_i) \cup ([c_1 + 1, c_{i+1} + k_i) + z_i - c_1)\\
    &\supseteq (-\infty, z_i) \cup \{z_i\} \cup ([c_1 + 1, c_{i+1} + k_i) + z_i - c_1)\\
    &= (-\infty, c_{i+1} - c_1 + k_i + z_i).
\end{align*}
Since $z_i \not\in (C + W_{i-1}) \cup ((C + [1,k_i]) + z_i - c_1)$, then $z_i \not\in W_i + (C \cut \{c_i\})$. Moreover, we have 
\begin{align*}
z_{i+1} &\geq \min\{C + c_{i+1} - c_1 + k_i + z_i\} = c_{i+1} + k_i + z_i > c_{i+1} + z_i\,.
\end{align*}
Thus $C + (W_{i+1} \cut W_i) \subseteq (z_i, \infty)$.  Since the $z_i$ are non-decreasing, we have $z_i \not\in W_k + (C \cut \{c_i\})$ for any $k \geq i$. By definition, $z_i \not\in W_k + (C \cut \{c_i\})$ for $k < i$.  Using the fact that $z_i$ is represented uniquely as $(z_i - c_i) + c_i$ in $C + W_i$, we have $z_i \notin W_k + (C \cut \{c_i\})$ for any $k \in \zzz_{\geq 0}$.   

Let $W = \bigcup_{i \geq 0} W_i$. Since $W_i \supseteq (-\infty, c_{i+1} - c_1 + k_i + z_i)$, we have $C + W = \zzz$.  Moreover, $z_i$ is not an element $W + (C \cut \{c_i\})$ for any $i \in \nnn$. Thus $C$ is a minimal additive complement to $W$.

We lastly check that every integer is finitely $(C,W)$-represented.  By Lemma \ref{lem: filling negatives}, every element of $C + D$ is finitely $(C,D)$-represented, so it is enough to show that every element of $C + (W\cut D)$ is finitely $(C, W \cut D)$-represented.  Recall that $D = W_0$, so $W \cut D = \bigcup_{i=0}^\infty (W_{i+1} \cut W_i)$.  As $W_{i+1} \cut W_i$ is finite for $i \geq 1$, it is sufficient to show that no integer appears in $C + (W_{i+1} \cut W_i)$ for infinitely many $i$.  This is clear from our previous observation that $C + (W_{i+1} \cut W_i) \subseteq (z_i, \infty) \subseteq (i,\infty)$.
\end{proof}

In order to complete our proof of Theorem \ref{thm: co-minimal strengthening}, we make use of Lemma \ref{lem: finite rep} from the work of Biswas and Saha \cite[Lemma 3.3]{BS2}.  The proof of Theorem \ref{thm: co-minimal strengthening} is then a combination of this lemma and the previous result.

\begin{proof}[Proof of Theorem \ref{thm: co-minimal strengthening}]
Let $C = \{c_1 < c_2 < \cdots\}$ be an infinite set of natural numbers with $g(C) = \infty$.  By Lemma \ref{lem: large gaps MAC}, $C$ arises as a minimal additive complement to a set $W$ such that every integer is finitely $(C,W)$-represented.  By Lemma \ref{lem: finite rep}, there exists a subset $W'$ of $W$ such that $W'$ is a minimal additive complement to $C$.  As $W' \subseteq W$, $C$ is a minimal additive complement to $W'$.  Thus $(C,W')$ is a co-minimal pair in the integers.
\end{proof}

We now shift our focus to certain sets containing arbitrarily large intervals in order to prove Theorem \ref{thm: long runs not MAC}.  

\begin{lemma}\label{lem: not MAC to finite}
If $C$ is a set of integers which 
satisfies $g(\Z \cut  C) = \infty$, then $C$ does not arise as a minimal additive complement to any finite set.
\end{lemma}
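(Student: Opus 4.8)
The plan is to argue by contradiction. Suppose $C$ is a minimal additive complement to some finite set $W \subseteq \Z$. The guiding intuition is that the hypothesis $g(\Z \cut C) = \infty$ forces $C$ to contain arbitrarily long intervals of consecutive integers, and that every element sitting deep inside such an interval is \emph{redundant}: each of its translates under $W$ admits an alternative representation avoiding it, so it has no dependent element, contradicting minimality (via the remark following Definition \ref{dependenteltdef}).

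First I would dispose of degenerate cases. If $W = \emptyset$ then $C + W = \emptyset \neq \Z$, and if $|W| = 1$, say $W = \{w\}$, then $C + W = \Z$ forces $C = \Z$, whence $\Z \cut C = \emptyset$ and $g(\Z \cut C)$ is not even defined; in either situation the hypothesis fails. Hence I may assume $|W| \geq 2$ and set $d = \max W - \min W \geq 1$. Next I would extract a long interval from the gap condition: writing $\overline{C} = \Z \cut C$ in increasing order, $g(\overline{C}) = \infty$ means there exist consecutive elements of $\overline{C}$ whose difference is arbitrarily large, and every integer strictly between two such consecutive elements lies in $C$. Thus $C$ contains an interval $I = [a,b]$ with $b - a > 2d$.

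Finally, I would pin down a redundant element. Choose any $c$ in the interior subinterval $[a+d, b-d]$, which is nonempty (since $b - a > 2d$) and contained in $I \subseteq C$. For each $w_i \in W$, pick some $w_j \in W$ with $w_j \neq w_i$ (possible because $|W| \geq 2$) and set $c' = c + w_i - w_j$. Then $|c' - c| = |w_i - w_j| \leq d$, so $c' \in [a,b] = I \subseteq C$, while $c' \neq c$; moreover $c' + w_j = c + w_i$. Therefore every element of $c + W$ already lies in $(C \cut \{c\}) + W$, so $c$ has no dependent element. This contradicts the minimality of $C$ and completes the argument.

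I do not expect a genuine obstacle here: the only real content is the observation that inside an interval longer than $2\cdot\mathrm{diam}(W)$, any translate $c + w_i$ can be rewritten as $c' + w_j$ with $w_j \neq w_i$ and $c'$ a nearby element still lying in the interval (hence in $C$). The only care required is the bookkeeping ensuring $|W| \geq 2$ and the interval-length bound $b - a > 2d$, both of which are handled in the reductions above.
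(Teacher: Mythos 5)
Your proof is correct and takes essentially the same approach as the paper's: extract from $g(\Z \setminus C) = \infty$ a long interval contained in $C$, pick an element $c$ well inside it, and show each translate $c + w_i$ can be re-represented as $c' + w_j$ with $c' \neq c$ still inside the interval, so $C \setminus \{c\}$ remains a complement, contradicting minimality. The only cosmetic difference is that you place $c$ in the middle of the interval so that every shift $w_i - w_j$ stays inside, whereas the paper places its redundant element near the left end and splits into cases (shifting left via $s_{i+1}$ for $i < n$, and right via $s_1$ for $i = n$); your choice avoids that case analysis.
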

\begin{proof}
Suppose that $C$ is a minimal additive complement to a finite set $S = \{s_1 <  \cdots < s_n\}$.  Note that  $\Z \not= C$ implies $n \geq 2$.  Let $c$ and $c'$ be consecutive elements of $\Z \cut C$ satisfying that $c'-c > g(S) + 2 + (s_n - s_1)$. Consider the element $z = c + g(S) + 1 \in C$.  For $1 \leq i < n$, we have $z + s_i \in C\cut\{z\} + s_{i+1}$, since
$$    C\cut \{z\} + s_{i+1} - s_i \supseteq (c,z) + s_{i+1} - s_i \supseteq (c+ g(S), z + 1) = \{z\}\,.$$
Since
$$s_1 + c < z + s_n = g(S) + 1 + s_n + c < (c' - c) + s_1 + c = s_1 + c',$$
we have that $z + s_n \in (c,c') + s_1 \subseteq C + s_1$.  Moreover, $n \geq 2$ implies that $z + s_n \not= z + s_1$.  Hence $z + s_n \in C \cut \{z\} + s_1$.  

Therefore $z + S \subseteq C \cut\{z\} + S$.  Combining this with the assumption that $C + S = \Z$, we have that $C \cut\{z\} + S = \Z$, contradicting that $C$ is a minimal additive complement to $S$.
\end{proof}

\begin{remark}
Lemma \ref{lem: not MAC to finite} actually follows immediately from Theorem \ref{thm: MAC to finite density} along with the observation that a set satisfying the conditions of Lemma \ref{lem: not MAC to finite} has upper Banach density $1$.  However, we have provided the above proof of this lemma both to avoid a dependence on our later work in Section \ref{sec: finite} and because the argument is distinct from that used for Theorem \ref{thm: MAC to finite density}.
\end{remark}

\begin{proof}[Proof of Theorem \ref{thm: long runs not MAC}]
Suppose that $C$ is a minimal additive complement to some set $W$.  By Lemma \ref{lem: not MAC to finite}, $W$ must be infinite.  Fix distinct elements $w,w' \in W$ such that $w - w' > \ell$.  For sufficiently large $z \in \zzz$, $z \notin C$ implies $z + (w - w') \in C$.  Thus the set $\zzz \cut (C + \{w,w'\})$ is bounded above.  

Moreover, our assumption on $\overline{C_+}$ implies $g(\overline{C_+}) = \infty$.  Hence, there are infinitely many elements $c \in C_+$ such that both $c + (w - w'), c + (w' - w) \in C_+$.  Thus, neither $c + w$ nor $c + w'$ is dependent on $c$, since they can be represented twice in the sumset $C + W$ as $c + w = c + (w - w') + w'$ and $c + w' = c + (w - w') + w'$, respectively.  So some element of $\zzz \cut (C + \{w,w'\})$ must be dependent on $c$.  Since this holds for infinitely many positive $c \in C_+$, $W$ must contain infinitely many negative elements.    

Choose integers $z, z' \in \zzz$ such that $z,z'$ are dependent on $c, c' \in C_+$, respectively, and satisfy $z - z' > \ell$.  Thus, there are infinitely many negative $w \in W$ such that $z, z' \not\in C \cut \{c,c'\} + w$.  That is, $z-w, z'-w \not \in C \cut \{c,c'\}$ for infinitely many negative $w$. Hence there are infinitely many elements of $\overline{C_+}$ of distance $z' - z$ apart.  This contradicts our assumption that we have $\overline{c_{j}} - \overline{c_i} \notin [\ell,z' - z]$ for sufficiently large $i$ and $j > i$.
\end{proof}

A combination of the above results yields the proof of Theorem \ref{thm: dualCY}. 

\begin{proof}[Proof of Theorem \ref{thm: dualCY}]
The first claim follows directly from Lemma \ref{lem: large gaps MAC}.  The second claim is a special case of Theorem \ref{thm: long runs not MAC}, where $C_+ = C$ and $\ell = 1$.
\end{proof}

\begin{observation}\label{obs: has, isn't}
Let $C$ be a set satisfying Condition $(ii)$ of Theorem \ref{thm: dualCY}, and let $B$ be any infinite, bounded-above set.  Note that Theorem \ref{thm: long runs not MAC} implies that $B \cup C$ does not arise as a minimal additive complement, while Theorem \ref{thm: CY1} implies that $B \cup C$ has a minimal additive complement.  Thus, we have a class of sets that have but do not arise as minimal additive complements.

The sets satisfying Condition $(i)$ of Theorems \ref{thm: CY2} and \ref{thm: dualCY} form a class of sets that both have and arise as minimal additive complements.  Similarly, the sets satisfying Condition $(ii)$ of Theorems \ref{thm: CY2} and \ref{thm: dualCY} form a class of sets that do not have and do not arise as minimal additive complements.  It remains to exhibit an infinite class of sets that arise as but do not have minimal additive complements.  We will see such a class in Section \ref{sec: eventually periodic}, namely those sets discussed in Remark \ref{rem: arise but don't have MAC}.
\end{observation}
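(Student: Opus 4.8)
The plan is to assemble Observation~\ref{obs: has, isn't} from the four classification results already in hand, so that the only genuine content is the verification that the set $B\cup C$ of the first paragraph falls simultaneously under Theorem~\ref{thm: long runs not MAC} and Theorem~\ref{thm: CY1}. I would treat the four cells of the have/arise dichotomy one at a time, beginning with the new class of sets that have but do not arise. Fix $C$ satisfying Condition~(ii) of Theorem~\ref{thm: dualCY} and let $B$ be infinite and bounded above. Since $B$ is bounded above, $B\cap\N$ is finite, so $(B\cup C)\cap\N$ differs from $C$ by finitely many elements and hence $\N\setminus(B\cup C)$ differs from $\overline{C}=\N\setminus C$ by finitely many elements. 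Condition~(ii) gives $\overline{c_{i+1}}-\overline{c_i}\to\infty$, and deleting finitely many entries from $\overline{C}$ only merges finitely many consecutive gaps, so the complement of $(B\cup C)\cap\N$ still has consecutive gaps tending to infinity. From this I would deduce the hypothesis of Theorem~\ref{thm: long runs not MAC} with, say, $\ell=1$: for each $m$ there is an index beyond which every single consecutive gap already exceeds $m$, so the smallest difference $\overline{c_j}-\overline{c_i}$ with $j>i$ exceeds $m$ once $i$ is large, and in particular no such difference lies in $[1,m]$. Theorem~\ref{thm: long runs not MAC} then yields that $B\cup C$ does not arise as a minimal additive complement. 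On the other hand $B\cup C$ is unbounded below (it contains infinitely many negative elements of $B$) and unbounded above (it contains $C$), so Theorem~\ref{thm: CY1} shows it has a minimal additive complement.

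Next I would dispatch the two diagonal cells by simply pairing the classification theorems. A set satisfying Condition~(i) has a minimal additive complement by Theorem~\ref{thm: CY2}(i) and arises as one by Theorem~\ref{thm: dualCY}(i); a set satisfying Condition~(ii) fails to have one by Theorem~\ref{thm: CY2}(ii) and fails to arise by Theorem~\ref{thm: dualCY}(ii). No new argument is needed here. The remaining cell---sets that arise but do not have minimal additive complements---is the one not settled by the present section, and for it I would only record the forward reference to the eventually periodic examples of Section~\ref{sec: eventually periodic} discussed in Remark~\ref{rem: arise but don't have MAC}.

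The one point I would guard against, and the only place any care is required, is the finite discrepancy introduced by $B\cap\N$ in the first paragraph. Theorem~\ref{thm: long runs not MAC} is phrased in terms of the positive part $C_+$ and its complement, so I must confirm that adjoining finitely many nonnegative elements of $B$ neither destroys the growth of the consecutive gaps of the complement nor produces new pairwise differences in the forbidden window $[\ell,m]$ at large index. Both follow from the fact that a finite perturbation alters only finitely many consecutive gaps and only finitely many pairwise differences among small-index complement elements, leaving the tail behavior---precisely what the ``sufficiently large $i$'' clause of Theorem~\ref{thm: long runs not MAC} depends on---unchanged. Beyond this bookkeeping I expect no real obstacle, since the statement is essentially a corollary of results already proved.
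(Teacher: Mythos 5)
Your proposal is correct and takes essentially the same route as the paper, which proves the observation simply by citing Theorems \ref{thm: CY1}, \ref{thm: CY2}, \ref{thm: dualCY}, and \ref{thm: long runs not MAC} together with the forward reference to Remark \ref{rem: arise but don't have MAC}. The one detail you add---checking that the finite perturbation by $B \cap \N$ leaves the gaps of the complement tending to infinity, so that the hypothesis of Theorem \ref{thm: long runs not MAC} holds with $\ell = 1$---is precisely the routine verification the paper leaves implicit, and your handling of it is sound.
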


\begin{remark}
Note that, as is the case in Chen and Yang's result (Theorem \ref{thm: CY2}),  the limit in the Condition (ii) of Theorem \ref{thm: dualCY} cannot be improved to a limit superior.  For example, consider the set
$$W = \{w_1 < w_2 < \cdots\} =\bigcup_{k = 0}^\infty \left[2^{2k}, 2^{2k+1}\right)\,.$$
Let $\nnn \cut W = \{\overline{w_1} < \overline{w_2} < \cdots\}$.  Then $W$ satisfies $\limsup\limits_{i \to \infty} w_{i+1} - w_i = \infty$ and $\limsup\limits_{i \to \infty} \overline{w_{i + 1}} - \overline{w_i} = \infty$.
\end{remark}

\section{Syndetic Sets}\label{sec: finite}
We now shift our focus to syndetic sets of integers arising as minimal additive complements.  A set $A \subseteq \Z$ is called {\it syndetic} if it is bounded neither above nor below and has bounded gaps, i.e., if there exists $k\in \N$ such that $A \cap I$ is nonempty for every interval $I$ of length $k$.  It it straightforward to see that the syndetic sets are precisely the sets that are (not necessarily minimal) additive complements to finite sets in the integers.  
In this section, we bound the upper and lower Banach density of syndetic sets arising as minimal additive complements to finite sets.

In particular, these results recover some of the density bounds on periodic sets arising as minimal additive complements in the integers.  A set of integers $S\subseteq Z$ is said to be \textit{periodic} with period $m\in \N$ provided that $S + m =S$ for some $m \in \N$ and $m$ is the minimum positive integer with this property.  It is straightforward to see that if $S \subseteq \Z$ is a periodic set with period $m$, then $S$ arises as a minimal additive complement in $\Z$ if and only if $S_{(m)}$ arises as a minimal additive complement in $\Z/m\Z$.  This follows from a more general result about quotients in abelian groups, presented in Proposition \ref{prop: quotient MACS}.

Let $H\subseteq G$ be a subgroup of $G$. A set $S\subseteq G$ is said to be $H$\textit{-invariant} provided that 
$H + S = S$, i.e., $S$ is a union of cosets of $H$.
Suppose $S\subseteq G$ is $H$-invariant. We will use the notation
\[
S/H := \pi(S),
\]
where $\pi\colon G\to G/H$ is the projection map.

\begin{proposition}\label{prop: quotient MACS}
Let $G$ be an abelian group and let $H\subseteq G$ be a subgroup. Suppose $C\subseteq G$ is $H$-invariant. Then $C$ arises as a minimal additive complement in $G$ if and only if $C/H$ arises as a minimal additive complement in $G/H$.

Moreover, in this case, $C$ and $C/H$ arise as minimal additive complements to $W\subseteq G$ and $S\subseteq G/H$, respectively, with $|W| = |S|$.
\end{proposition}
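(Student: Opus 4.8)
The plan is to exploit the fact that, since $C$ is $H$-invariant, membership of an element of $G$ in $C$ depends only on its coset: writing $\pi\colon G \to G/H$ for the projection, we have $C = \pi^{-1}(C/H)$, so that for any $z,w \in G$ one has $z - w \in C$ if and only if $\pi(z) - \pi(w) \in C/H$. This single observation lets me transport representations back and forth between $C + W$ in $G$ and $(C/H) + S$ in $G/H$. I would prove the two implications separately and then read off the cardinality statement from the construction used in the reverse direction.

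For the forward implication, suppose $C$ is a minimal additive complement to $W \subseteq G$ and set $S = \pi(W)$. Projecting $C + W = G$ gives $(C/H) + S = G/H$, so $S$ is a complement to $C/H$. To check minimality I must produce, for each $\overline{c} \in C/H$, a dependent element. I would lift $\overline{c}$ to some $c \in C$, take an element $z \in G$ dependent on $c$ (which exists by minimality in $G$), and argue that $\pi(z)$ is dependent on $\overline{c}$ in $(C/H) + S$. The key step is the contrapositive: any representation $\pi(z) = \overline{c}\,' + s$ with $\overline{c}\,' \in C/H$, $\overline{c}\,' \neq \overline{c}$, and $s \in S$ lifts---using the $H$-invariance of $C$ to adjust the chosen lift of $\overline{c}\,'$ by an element of $H$---to a genuine representation $z = c''' + w'$ in $C + W$ with $\pi(c''') = \overline{c}\,' \neq \overline{c}$, hence $c''' \neq c$, contradicting that $z$ is dependent on $c$.

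For the reverse implication, suppose $C/H$ is a minimal additive complement to $S \subseteq G/H$. I would choose a transversal $W \subseteq G$ with $\pi|_W \colon W \to S$ a bijection, so that $|W| = |S|$; this injectivity on cosets is essential. The central observation is that, for every $z \in G$, the map $w \mapsto \pi(w)$ gives a bijection between the representations $(z-w, w)$ of $z$ in $C + W$ and the representations of $\pi(z)$ in $(C/H) + S$, under which the $C$-part $z - w$ projects to the $C/H$-part $\pi(z) - \pi(w)$. From this, $C + W = G$ follows immediately (as $(C/H) + S = G/H$). For minimality, given $c \in C$ I would take an element $\overline{z}$ dependent on $\pi(c)$, so that $\overline{z} = \pi(c) + s_0$ is its unique representation; lifting $s_0$ to $w_0 \in W$ and setting $z = c + w_0$, the bijection forces $z$ to have the single representation $(c, w_0)$ in $C + W$, making $z$ dependent on $c$. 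Hence $C$ is a minimal additive complement to $W$ with $|W| = |S|$.

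Combining the two directions yields the equivalence; moreover, whenever the two equivalent conditions hold, the transversal construction of the reverse implication produces $W$ and $S$ with $|W|=|S|$ realizing $C$ and $C/H$ simultaneously, which is the ``moreover'' claim. The main obstacle throughout is conceptual rather than computational: because $H$ may be nontrivial, deleting a single $c$ from $C$ leaves the rest of the coset $c + H$ inside $C$, so it is not a priori clear that $c$ can have a dependent element or that dependence survives projection. Both difficulties are resolved by the two uses of $H$-invariance above---transferring representations upward in the forward direction, and choosing $W$ to meet each relevant coset exactly once in the reverse direction so that representations correspond bijectively.
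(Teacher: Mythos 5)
Your proof is correct, and its overall skeleton matches the paper's: in the forward direction you take $S=\pi(W)$, in the reverse direction you take $W$ to be a transversal meeting each fiber $\pi^{-1}(s)$, $s\in S$, exactly once, and the cardinality claim $|W|=|S|$ comes from that transversal. The one place where your argument genuinely diverges is the forward-direction minimality check. The paper argues globally, by contradiction: a hypothetical proper complement $S'\subset C/H$ pulls back to the $H$-invariant set $D=\pi^{-1}(S')$, which is a proper subset of $C$ (by $H$-invariance of $C$) and is still a complement to $W$, contradicting minimality of $C$. You instead argue locally, pushing dependent elements forward: if $z$ is dependent on $c$, then $\pi(z)$ is dependent on $\pi(c)$, because any representation $\pi(z)=\overline{c}\,'+s$ with $\overline{c}\,'\neq\pi(c)$ lifts (choosing $w'\in W$ over $s$ and using $z-w'\in\pi^{-1}(\overline{c}\,')\subseteq C$) to a representation of $z$ avoiding $c$. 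The pullback argument is slightly more economical and shows that minimality of $C$ need only be invoked against $H$-invariant proper subsets (i.e., against deleting whole cosets); your pushforward argument produces explicit dependent elements in the quotient and dovetails with the representation-counting bijection you use for the reverse direction---a step the paper compresses to ``it is straightforward to check.'' Both uses of $H$-invariance (namely $C=\pi^{-1}(C/H)$, so membership in $C$ is detected modulo $H$) are exactly where the paper uses it as well, and you correctly treat $\pi(W)$ rather than assuming $W$ is $H$-invariant, a point the paper's notation $W/H$ glosses over.
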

\begin{proof}
Suppose $C$ is a minimal additive complement to $W$ in $G$.   Then we have $$C/H + W/H = (C + W)/H = G/H\,,$$
so $C/H$ is an additive complement to $W/H$ in $G/H$.  Suppose there exists a proper subset $S\subset C/H$ such that $S + W/H = G/H$. Let $D=\pi^{-1}(S)$, where $\pi\colon G\to G/H$ is the projection map.
Note that $D$ is $H$-invariant, and moreover that $D$ is a proper subset of $C$ because $D/H$ is a proper subset of $C/H$ and $C$ is $H$-invariant. Furthermore $D/H + W/H =S + W/H= G/H$. 
That $D$ is an additive complement to $W$ in $G$ follows from the fact that $D/H$ is an additive complement to $W/H$ in $G/H$ and both $D$ and $W$ are $H$-invariant.  This yields a contradiction, as $D$ is a proper subset of $C$.

Conversely, suppose $C/H$ is a minimal additive complement to $S\subseteq G/H$. Construct $W\subseteq G$ such that it contains exactly one element of $\pi^{-1}(s)$ for each $s\in S$, where $\pi\colon G\to G/H$ is the projection map. It is straightforward to check that $C$ is a minimal additive complement to $W$. Finally, when $W$ is constructed in this way, we have $|W|=|S|$.
\end{proof}


Alon, Kravitz, and Larson \cite{AKL} recently studied the size of sets arising as minimal additive complements in finite groups.  As they noted, by Proposition \ref{prop: quotient MACS} their results on finite cyclic groups extend to periodic sets in the integers and show that almost all periodic sets (when ordered by period) do not arise as minimal additive complements.

We briefly mention an extension of their upper bound on the density of minimal additive complements in finite groups \cite[Proposition 13]{AKL}, which we require later for the proof of Corollary \ref{cor: ev per density}.  In Lemma \ref{lem: UBminimalsetonly}, we show that the argument extends for sets $C$ in a finite group that are minimal with respect to having the sumset $C + W$ for some set $W$.  Of course, both $G$ and $\{0\}$ arise as minimal additive complements in any group, but excluding these cases there are nontrivial restrictions on the density.

\begin{lemma}\label{lem: UBminimalsetonly}
Suppose $C,W\subseteq G$ and for every proper subset $D\subset C$, $D + W \neq C + W$. Let $|W|= k > 1$. Then
$$    |C| \le \frac{k}{2k-1}|G|\,.$$
\end{lemma}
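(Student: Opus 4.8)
The plan is to extract from the minimality hypothesis, for each $c \in C$, a distinguished ``witness'' element of the sumset, and then to run a double-counting argument on $C + W$ that forces $|C+W|$ to be large relative to $|C|$; since $C + W \subseteq G$, this in turn bounds $|C|$.

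First I would set $M = C + W$ and record the hypothesis in a usable form. For each $c \in C$, the set $C \setminus \{c\}$ is a proper subset of $C$, so $(C \setminus \{c\}) + W \neq M$; as $(C\setminus\{c\}) + W \subseteq M$, there is an element $z_c \in M \setminus \big((C\setminus\{c\}) + W\big)$. Writing $z_c = c + w_c$ with $w_c \in W$, two facts follow immediately. First, the representation $z_c = c + w_c$ is the \emph{only} way to write $z_c$ as an element of $C + W$: any representation $z_c = c' + w'$ with $c' \neq c$ would place $z_c$ in $(C \setminus \{c\}) + W$. Second, the map $c \mapsto z_c$ is injective, since $z_c = z_{c'}$ with $c' \neq c$ would again exhibit $z_c \in (C\setminus\{c\})+W$. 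Hence the witness set $Z = \{z_c : c \in C\}$ satisfies $|Z| = |C|$.

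Next I would manufacture many elements of $M$ lying \emph{outside} $Z$. Consider the collection of pairs $(c,w)$ with $c \in C$ and $w \in W \setminus \{w_c\}$, of which there are exactly $(k-1)|C|$, and let $Y$ be the set of sums $c + w$ they produce. The key step is that $Y \cap Z = \emptyset$: if $c + w = z_{c'}$ for some $c'$, the uniqueness noted above forces $(c,w) = (c', w_{c'})$, contradicting $w \neq w_c$. I would then bound $|Y|$ from below by a representation count: every $y \in M$ can be written as $c + w$ with $(c,w) \in C \times W$ in at most $|W| = k$ ways, so $Y$, being the image of $(k-1)|C|$ pairs under a map that is at most $k$-to-one, satisfies $|Y| \ge (k-1)|C|/k$.

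Finally I would assemble the bound. Since $Z$ and $Y$ are disjoint subsets of $M$,
\[
|M| \;\ge\; |Z| + |Y| \;\ge\; |C| + \frac{k-1}{k}\,|C| \;=\; \frac{2k-1}{k}\,|C|,
\]
and $M \subseteq G$ gives $\tfrac{2k-1}{k}|C| \le |G|$, which rearranges to the claimed inequality. The only place demanding genuine care is the disjointness $Y \cap Z = \emptyset$, which is exactly where the uniqueness of the witness representation (together with the restriction $w \neq w_c$) is essential; everything else is bookkeeping. I expect this to mirror the argument behind \cite[Proposition 13]{AKL}, with the witnesses $z_c$ playing the role that dependent elements play in the case $C + W = G$.
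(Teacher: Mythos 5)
Your proof is correct, and its first half --- extracting for each $c \in C$ a witness $z_c \in (C+W) \setminus \bigl((C \setminus \{c\}) + W\bigr)$, observing that $z_c = c + w_c$ is its \emph{unique} representation in $C+W$, and deducing that $c \mapsto z_c$ is injective --- is exactly the paper's opening move. Where you diverge is in how the double count is run. The paper takes the same $(k-1)|C|$ pairs (in the form $(z_c, w')$ with $w' \neq w_c$) but maps each to the \emph{difference} $(z_c - w', w')$, shows $z_c - w' \notin C$ (if it were in $C$ then $z_c \in C + w'$, forcing $w' = w_c$), and concludes via a bijection of pair sets that $(k-1)|C| \leq k\,(|G| - |C|)$. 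You instead map each pair $(c,w)$ to the \emph{sum} $c+w$, show the image $Y$ avoids $Z$ while staying inside $C+W$, and use the at-most-$k$-to-one multiplicity to get $|C+W| \geq |Z| + |Y| \geq \frac{2k-1}{k}|C|$. The two counts are dual and rest on the same uniqueness fact, but they charge the pairs to different sets: the paper charges them to elements of $G \setminus C$, while you charge them to elements of $(C+W) \setminus Z$. Your organization has a small payoff the paper's does not directly give: the intermediate inequality $|C+W| \geq \frac{2k-1}{k}|C|$ is a sumset lower bound that remains meaningful even when $G$ is infinite (where the lemma as stated is vacuous), since it depends only on $C$ and $W$. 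Conversely, the paper's version avoids dividing by $k$: its map on pairs is injective outright, so the final step is a clean comparison of cardinalities rather than a bounded-multiplicity estimate.
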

\begin{proof}
For each $c \in C$, choose an element $$z(c) \in (C + W) \cut ((C \cut \{c\}) + W)\,.$$  Note that such an element exists by our assumption of the minimality of $C$ with respect to the sum $C + W$.  Let $Z = \{z(c) : c \in C\} \subseteq C + W$.  Observe that for each $z \in Z$, there is a unique element $w(z)$ of $W$ such that $z \in C + w(z)$.  In particular, if $z = z(c)$ then $w(z) = z - c$.  Let $W= \{w(z) : z \in Z\}$. 

Let $M$ be the set of pairs $(z,w')$ where $z \in Z$ and $w'\in W\setminus \{w(z)\}$. Since $|Z| = |C|$, there are $|C|\cdot(|W| - 1) = |C|\cdot (k-1)$ such pairs in $M$. Let $N$ be the set of pairs $(z-w',w')$ with $z \in Z$ and $w'\in W\setminus \{w(z)\}$. Observe that if $(z-w',w') \in N$, then $z - w' \in G \cut C$.  This follows because if $z - w'$ were in $C$, then $z \in C + w'$. Hence $w' = w(z)$, contradicting our choice of $w'$.  Thus $N \subseteq (G \cut C)\times W$, so $|N| \leq (|G| - |C|)\cdot k$.  

The set of pairs $N$ is in bijection with the set of pairs $M$ via the map $(z,w')\mapsto (z-w', w')$. Therefore
\[
    (k-1)|C| = |M| = |N| \leq (|G|-|C|)\cdot k\,,
\]
which directly implies the stated inequality.
\end{proof}

We now introduce the notion of Banach density in order to extend these combinatorial bounds to syndetic sets that are not necessarily periodic.  Note that these limits always exist; see \cite{GTT} for a proof and alternate definitions. 

\begin{definition}
The {\it upper Banach} (or {\it uniform}) {\it density} of $C \subseteq \Z$ is given by
$$\overline{b}(C) = \lim_{n \to \infty} \max_{x \in \Z} \frac{|A \cap [x+1, x+n]|}{n}\,.$$
The {\it lower Banach} (or {\it uniform}) {\it density} of $C \subseteq \Z$ is given by
$$\underline{b}(C) = \lim_{n \to \infty} \min_{x \in \Z} \frac{|A \cap [x+1, x+n]|}{n}\,.$$
\end{definition}

Proposition \ref{prop: quotient MACS} combined with Lemma \ref{lem: UBminimalsetonly} recovers the result of Alon, Kravitz, and Larson \cite[Proposition 13]{AKL} stating that periodic sets arising as minimal additive complements in $\Z$, aside from $\Z$ itself, have upper Banach density at most $2/3$.  In Theorem \ref{thm: MAC to finite density}, we show the same result holds for all syndetic sets
which arise as minimal additive complements to finite sets.
We also give a straightforward argument that the lower Banach density of a syndetic set $W$ is at least the reciprocal of the size of the smallest set to which $W$ is as a minimal additive complement. By Proposition \ref{prop: quotient MACS}, these bounds also immediately apply to all periodic sets.

\begin{theorem}\label{thm: MAC to finite density}
Suppose that $C \subseteq \Z$ arises as a minimal additive complement to a set of size $k \geq 2$.  Then
$$\frac{1}{k} \leq \underline{b}(C) \leq \overline{b}(C) \leq \frac{k}{2k - 1}\,.$$
\end{theorem}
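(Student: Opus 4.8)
The middle inequality $\underline{b}(C) \le \overline{b}(C)$ is immediate from the definitions, so the plan is to establish the two outer bounds separately. Throughout, I would fix a set $W$ of size $k$ to which $C$ is a minimal additive complement, write $W = \{w_1 < \cdots < w_k\}$, and let $\delta = w_k - w_1$ denote its diameter. The strategy in both halves is to work with an interval $J = [x+1, x+n]$, derive a bound that is uniform in $x$ up to an additive error $O(\delta)$, and then let $n \to \infty$ so that the boundary error washes out in the density limit.

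For the lower bound I would use only that $C$ is an additive complement, so that $\Z = \bigcup_{w \in W}(C + w)$ is covered by $k$ translates of $C$. Given $J = [x+1,x+n]$, each point of $J$ lies in some $C + w$, and the corresponding element of $C$ then lies in $J - w \subseteq [x+1-w_k,\, x+n-w_1] =: I'$, an interval of length $n + \delta$. Hence $n = |J| \le \sum_{w \in W} |C \cap (J - w)| \le k\,|C \cap I'|$, so every interval of length $n + \delta$ meets $C$ in at least $n/k$ points. Dividing by $n+\delta$ and letting $n \to \infty$ yields $\underline{b}(C) \ge \lim_{n} \frac{n/k}{\,n + \delta\,} = \frac{1}{k}$.

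For the upper bound the plan is to run the double-counting argument of Lemma \ref{lem: UBminimalsetonly} inside the interval $J$, with interval lengths playing the role of $|G|$. For each $c \in C \cap J$, minimality furnishes a dependent element $z(c) \notin (C \setminus \{c\}) + W$; since $z(c) \in c + W$, it lies within $\delta$ of $c$, so $Z_J := \{z(c) : c \in C \cap J\}$ is contained in an interval of length $n + \delta$ and satisfies $|Z_J| = |C \cap J|$ (the map $c \mapsto z(c)$ is injective because a dependent element admits a single summand in $C$). Exactly as in the lemma, each $z \in Z_J$ has a unique $w(z) \in W$ with $z - w(z) \in C$, and $z - w' \notin C$ for every $w' \in W \setminus \{w(z)\}$. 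The assignment $(z, w') \mapsto (z - w', w')$ then injects the set of such pairs into $\bigl((\Z \setminus C) \cap J''\bigr) \times W$, where $J'' := [x+1-\delta,\, x+n+\delta]$ has length $n + 2\delta$. Counting both sides gives $(k-1)\,|C \cap J| \le k\,(n + 2\delta - |C \cap J''|)$, and bounding $|C \cap J''| \ge |C \cap J|$ rearranges to $(2k-1)|C \cap J| \le k(n+2\delta)$, i.e. $|C \cap J| \le \frac{k}{2k-1}(n + 2\delta)$. Since this is uniform in $x$, dividing by $n$ and letting $n \to \infty$ gives $\overline{b}(C) \le \frac{k}{2k-1}$.

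I expect the main obstacle to be the bookkeeping of boundary effects in the upper bound: unlike the exact finite-group identity $|N| = |M|$ in Lemma \ref{lem: UBminimalsetonly}, the image pairs here can spill just outside $J$, so I must enlarge $J$ to the intervals of length $n + \delta$ and $n + 2\delta$ above and verify that these enlargements contribute only an $O(\delta)$ error. The supporting facts — injectivity of $c \mapsto z(c)$, uniqueness of $w(z)$, and the containment $z(c) \in c + W$ — all hinge on a dependent element having a single representation in $C + W = \Z$; these are routine but must be checked in the infinite setting to keep the count exact.
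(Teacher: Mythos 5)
Your proposal is correct and follows essentially the same route as the paper: the lower bound by covering an interval with the $k$ translates $C+w$, and the upper bound by running the double-counting of Lemma \ref{lem: UBminimalsetonly} inside an interval, pairing each $c \in C \cap J$ with a dependent element and injecting the pairs $(z,w')$ into $(\Z\setminus C)\times W$ restricted to a slightly enlarged interval. The only quibble is the phrase ``it lies within $\delta$ of $c$'' --- in fact $z(c)-c \in [w_1,w_k]$, whose endpoints need not be small --- but the containments you actually use ($Z_J$ in an interval of length $n+\delta$, images in $J''$ of length $n+2\delta$) are correct, so the argument stands.
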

\begin{proof}
Let $C$ arise as a minimal additive complement to $W = \{0 = w_1<w_2<\cdots <w_k\}$.  

We begin by proving the lower bound.  For $x \in \Z$ and $n \in \N$ , we have
\begin{align*}
\left|[x + 1,x + n]\cap (C + W)\right| 
&\le \sum_{i=1}^k \left|[x + 1 - w_i,x + n-w_i]\cap C\right|\\
&\leq k\cdot |[x + 1 - w_k, x + n]\cap C|\\
&\leq k\cdot |[x + 1, x + n]\cap C|\ + k(w_k+1)\,.
\end{align*}

Dividing the first and last quantities by $n$, we see that the term $k(w_k+1)/n$ vanishes as $n$ approaches infinity.  Thus we have $\underline{b}(C + W) \leq k\underline{b}(C)$.   As we assume $C + W = \Z$, which has lower Banach density $1$, this implies $\underline{b}(C) \geq \frac{1}{k}$.

Now to address the upper bound. Consider the set of pairs $(w, z - w)$ where $z$ is dependent on some $c_z \in [x+1,x+n] \cap C$ and $w\in W\setminus \{z - c_z\}$. As there are at least $|[x+ 1, x + n]\cap C|$ integers that are dependent on some $c \in [x+ 1, x + n]\cap C$, there are at least $(k-1)|[x+ 1, x + n]\cap C|$ such pairs.  Moreover, suppose that $z - w \in C$.  Then we would have $(z - w) + w \in (C\cut{c_z}) + W$, contradicting that $z$ is dependent on $c_z$.  Thus, $z - w \in [x + 1 - w_k, x + n] \cut C$, so there are at most $k( n + w_k - |[x + 1 - w_k, x + n]\cap C|)$ such pairs.  Hence
\begin{align*}
(k-1)|[x+ 1, x + n]\cap C| &\leq k( n + w_k - |[x + 1 - w_k, x + n]\cap C|)\\ 
&\leq k(n - |[x+1,x+n] \cap C|) + kw_k\,,
\end{align*}
which implies
$$\frac{[x+1,x+n] \cap C}{n} \leq \frac{k}{2k-1} + \frac{kw_k}{(k-1)n}\,.$$
As $n$ approaches infinity, the last term vanishes.  Since this inequality holds for all choices of $x \in \Z$, we can conclude $\overline{b}(C) \leq \frac{k}{2k-1}$.
\end{proof}

\section{Eventually Periodic Sets}\label{sec: eventually periodic}
In their study of sets of integers having minimal additive complements, Kiss, S\'andor, and Yang \cite{KSY}  introduced the notion of \emph{eventually periodic sets} of integers.  

\begin{definition}\label{def: eventually periodic}
An \emph{eventually periodic set} (\emph{of period $m$}) is a set of integers of the form 
$$ (m\nnn + A) \cup B \cup F\,,$$
where $A$ is nonempty, $B$ and $F$ are finite, $B_{(m)} \subset A_{(m)}$, and $F_{(m)} \cap A_{(m)} = \emptyset$.  
\end{definition}

In other words, a set of integers is eventually periodic if it consists of a finite union of infinite (to the right) arithmetic progressions and singletons. Note that if we furthermore require that $m$ is the minimal period, $b + m \notin A$ for any $b \in B$, and that $A$ has at most one element in each equivalence class modulo $m$, then the representation of an eventually periodic set in terms of $(m,A,B,F)$ is unique.  We assume this is the case in the following discussion.  Kiss, S\'andor, and Yang showed that, in many cases, the question of whether an eventually periodic set $(m\nnn + A) \cup B \cup F$ has a minimal additive complement can be reduced to conditions on $A_{(m)}$ and $F_{(m)}$ in $\zzz/m\zzz$. However, in the dual setting such a reduction is not always possible, even in the case $m = 2$. This is illustrated by the following proposition, proved in greater generality later within this section.

\begin{proposition}\label{prop: evens odds}
Let $C = 2\nnn \cup B \cup F$, where $B \subset 2\zzz_{\leq 0}$ and $F \subset 2\zzz + 1$ are finite sets.  Then $C$ arises as a minimal additive complement if and only if $2\zzz \cut (2\nnn \cup B) = W + F$ for some $W \subseteq \zzz$
\end{proposition}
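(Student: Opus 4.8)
The plan is to separate $C$ into its even part $E := 2\nnn \cup B$ and its odd part $F$, to write $\overline{E} := 2\zzz \setminus E$ for the (cofinitely many) negative even integers missing from $E$, and to reformulate the condition: $\overline{E} = W + F$ for some $W$ is equivalent to saying that every element of $\overline{E}$ lies in a translate $F + w \subseteq \overline{E}$, a \emph{good} copy of $F$. For the forward direction I would suppose $C$ is a minimal additive complement to $W$ and split $W = W_e \cup W_o$ into its even and odd parts. The first step is a dichotomy: if $|W_e| \ge 2$ and $|W_o| \ge 2$, then for every large $2n \in 2\nnn$ and every $w \in W$ the sum $2n + w$ has a second representation $(2n + w - w') + w'$ with a distinct $w' \in W$ of the same parity and $2n + w - w' \in 2\nnn \subseteq E$, so $2n$ has no dependent element, contradicting minimality. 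Hence $|W_e| \le 1$ or $|W_o| \le 1$; a short argument rules out $W_e = \emptyset$ and $W_o = \emptyset$, and after translating $W$ by a suitable integer (odd when $|W_o| \le 1$) I may assume the normalized form $W = \{0\} \cup W_o$ with $W_o \subseteq 2\zzz + 1$.

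In this normalized situation the even elements of $C + W$ are exactly $E \cup (F + W_o)$, so covering $2\zzz$ forces $F + W_o \supseteq \overline{E}$, and since $\overline{E}$ is unbounded below while $F$ is finite, $W_o$ must be unbounded below. The key observation is then that no translate $F + w_o$ may meet $E$: if $f + w_o = e \in E$, then $e$ has the two representations $e + 0$ and $f + w_o$, while every odd candidate $e + w_o'$ is also doubly represented, because $e + w_o' = (e + w_o' - w_o'') + w_o''$ with $w_o'' \in W_o$ chosen far enough below that $e + w_o' - w_o'' \in 2\nnn \subseteq E \setminus \{e\}$; thus $e$ would have no dependent element. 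Therefore every $F + w_o \subseteq \overline{E}$, and combined with $F + W_o \supseteq \overline{E}$ this yields $\overline{E} = F + W_o$, the desired condition.

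For the converse, given $\overline{E} = F + W'$ I would build a complement of the normalized form $W = \{0\} \cup W''$, where $W''$ is a subcover consisting only of good copies and still satisfying $F + W'' = \overline{E}$. Then $C + W = \zzz$: the even part is $E \cup (F + W'') = E \cup \overline{E} = 2\zzz$, and the odd part contains $2\nnn + W''$, which is all of $2\zzz + 1$ since $W''$ is unbounded below (for odd $z$ choose $w'' \in W''$ with $w'' \le z$). Each $e \in E$ is represented only as $e + 0$, because no good copy meets $E$, so $e$ is its own dependent element and all of $E$ is handled. It remains to secure a dependent element for each $f \in F$: since $E + W''$ already covers every odd integer, $f$ cannot be its own dependent element, so $f$ needs an even witness $z = f + w'' \in \overline{E}$ that is \emph{irredundant}, meaning $z \notin (F \setminus \{f\}) + W''$.

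\textbf{Main obstacle.} The hard part is exactly this last requirement, that $W''$ can be chosen so that \emph{every} $f \in F$ is irredundant somewhere: the maximal good subcover need not achieve this. For instance, with $F = \{-1, 1\}$ and $\overline{E} = 2\zzz_{<0}$ the element $-1$ is redundant at every point of the maximal cover, so $W''$ must be selected with care. The largest element $\max F$ is always witnessed by the top point of the highest good copy, but the smaller elements (in particular $\min F$) demand locally sparsifying $W''$ to open clearance of width at least $\max F - \min F$ below a chosen copy without losing coverage of $\overline{E}$. I expect the crux of the whole proof to be this combinatorial construction, which I would carry out by covering $\overline{E}$ greedily from the top while repeatedly inserting isolated good copies arbitrarily far down the unbounded-below set $\overline{E}$, thereby producing an irredundant witness for each element of $F$ in turn.
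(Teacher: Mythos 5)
Your forward direction is correct and complete, and it takes a genuinely different route from the paper: the paper deduces Proposition \ref{prop: evens odds} by specializing its general eventually-periodic machinery (Theorems \ref{thm: ep necc}, \ref{thm: add necc}, and \ref{thm: ep suff}) to $m=2$, whereas your parity dichotomy, the normalization $W = \{0\} \cup W_o$, and the observation that no translate $F + w_o$ may meet $E$ give a short, self-contained proof of necessity. Your reformulation of the converse is also the right one: it suffices to build $W'' \subseteq 2\Z + 1$ with $F + W'' = \overline{E}$ such that each $f \in F$ has a witness $f + w \notin (F \setminus \{f\}) + W''$, and you correctly note that $\max F$ comes for free while the remaining elements are the real issue.

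The gap is that this crux is not carried out, and the construction you describe would fail as stated. Fully ``isolated good copies'' need not exist: take $F = \{1,3,7\}$, which is realizable in this setting (e.g.\ $C = 2\N \cup \{1,3,7\}$ satisfies the hypothesis, since $\{1,3,7\} + W' = 2\Z_{<0}$ for a suitable odd $W'$, as $\{0,1,3\} + \left(\{0,1\} + 5\Z_{<0}\right) = \Z_{<0}$). Deep inside $\overline{E}$ every even integer must be covered, in particular the hole $w_0 + 5$ of a planted copy $F + w_0 = \{w_0+1, w_0+3, w_0+7\}$; but the only translates covering $w_0 + 5$ are $w_0+4$, $w_0+2$, and $w_0-2$, and their copies meet $F + w_0$ at $w_0+7$, $w_0+3$, and $w_0+1$, respectively. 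So in any valid cover of $\overline{E}$, every copy is met by another copy, and full isolation is impossible. Your fallback mechanism, ``clearance of width $\max F - \min F$ below a chosen copy,'' protects only $\min F$: for a middle element $f$, the dangerous translates $(f + w_f) - f'$ with $f' < f$ lie \emph{above} $w_f$, so when $|F| \geq 3$ those elements remain unhandled. What is true, and sufficient, is the weaker statement that one designated \emph{point} of a planted copy can be made uniquely covered: forbid the at most $|F| - 1$ translates $(f + w_f) - f'$, $f' \in F \setminus \{f\}$, around each of $|F|$ well-separated planted points, and observe that no point $x$ of $\overline{E}$ can have all $|F|$ of its candidate translates $x - F$ forbidden (that would force $F + (p - x) \subseteq F$ for a planted point $p$, hence $x = p$, whose designated translate is allowed), so greedy covering still succeeds. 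This is precisely the content of the paper's Lemma \ref{lem: finite rest cover}, applied to a reflection of $\overline{E}$, whose proof deletes exactly such finite families of translates from a maximal cover; it is the engine behind Theorem \ref{thm: ep suff}. With that lemma, or with the counting argument just sketched, your converse goes through; as written, it is incomplete.
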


Note that the latter condition can be checked in finitely many steps, as $B$ and $F$ are finite.  However, this proposition illustrates that the information of $A_{(m)}, B_{(m)}, F_{(m)} \subseteq \Z/m\Z$ alone does not determine whether $(m\N + A) \cup B \cup F$ arises as a minimal additive complement.  Throughout this section we derive necessary conditions (which can be tested in finitely many steps) for eventually periodic sets to arise as minimal additive complements, and furthermore we construct a class of sets for which these conditions are also sufficient.

\begin{lemma}\label{lem: single equiv class}
Let $C \supset m\nnn + A$, where $|A \cap (z + m\zzz)| \leq 1$ for each $z \in [0,m)$.  If $C$ is a minimal additive to $W \subseteq \zzz$, then for each $a \in A$ there exists $w_a \in W$ such that $a + w_a \not\in A + (W \cut \{w_a\}) + m\zzz$.  
\end{lemma}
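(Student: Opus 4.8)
The plan is to exploit that, since $C$ is a minimal additive complement to $W$, every element of $C$ has a dependent element (the observation following Definition \ref{dependenteltdef}). Fix $a \in A$. For every sufficiently large $k$ the element $c_k := a + km$ lies in $m\nnn + A \subseteq C$, so it has a dependent element $z_k$. Because $z_k \in C + W$ but $z_k \notin (C \cut \{c_k\}) + W$, every representation of $z_k$ must use $c_k$; write $z_k = c_k + w^{(k)}$ with $w^{(k)} \in W$. The candidate witness $w_a$ will be one of these $w^{(k)}$, and the heart of the argument is to select a single $w^{(k)}$ that recurs for infinitely many $k$.

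First I would record the basic consequence of dependence: for every $w \in W \cut \{w^{(k)}\}$ we have $z_k - w \notin C$, and in particular $z_k - w \notin m\nnn + A$. Applying this to any $w \in W$ with $w \equiv w^{(k)} \pmod m$ and $w \neq w^{(k)}$, the difference $z_k - w$ lies in the residue class of $a$ modulo $m$; since it is not in $m\nnn + A$, it must be at most $a$, which forces $w \geq z_k - a = km + w^{(k)}$. Feeding this inequality into two indices $k_1 \neq k_2$ whose witnesses share a residue class (and are distinct) yields $w^{(k_2)} \geq k_1 m + w^{(k_1)}$ and $w^{(k_1)} \geq k_2 m + w^{(k_2)}$ simultaneously, hence $0 \geq (k_1 + k_2)m$, a contradiction. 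Thus distinct witnesses occupy distinct residue classes modulo $m$, so $\{w^{(k)} : k \text{ large}\}$ is finite; by pigeonhole some value $w_a$ equals $w^{(k)}$ for infinitely many $k$.

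It then remains to verify this $w_a$ satisfies the conclusion. I would argue by contradiction: if $a + w_a \equiv a' + w \pmod m$ for some $a' \in A$ and $w \in W \cut \{w_a\}$, then choosing $k$ large with $w^{(k)} = w_a$, the integer $z_k - w = a + km + w_a - w$ is congruent to $a'$ modulo $m$ and, for $k$ large enough (here $w$ is a fixed element), is positive and large, hence lies in $m\nnn + A \subseteq C$. This contradicts the dependence fact $z_k - w \notin C$, completing the proof.

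The main obstacle is that $W$ may be unbounded above, which defeats the naive strategy of fixing one large $k$ and reading off $w_a = w^{(k)}$: a single witness can fail because of an enormous element of $W$ sharing the relevant residue class. The two-index inequality above is precisely what overcomes this, pinning each residue class to at most one witness and thereby producing a recurring witness that can be tested against each fixed potential obstruction by sending $k \to \infty$. A secondary point to handle carefully is the convention for $\nnn$ (whether $a$ itself is the least element of its class in $m\nnn + A$), but this only shifts the constants in the inequalities and does not affect either contradiction.
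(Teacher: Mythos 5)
Your proof is correct, and it reaches the conclusion by a noticeably different mechanism than the paper, even though the shared skeleton is the same: minimality gives each $c_k = a + km \in C$ a dependent element $z_k = c_k + w^{(k)}$, and contradictions come from large integers in a residue class being double-covered by the periodic part $m\nnn + A$. The paper pigeonholes the \emph{dependent elements} into residue classes modulo $m$ (some class receives infinitely many of them) and then runs a case analysis on how $W$ sits inside the corresponding class: infinitely many negative elements of $W$ there would give every relevant integer infinitely many representations; two elements of $W$ bounded below would leave at most finitely many dependents; and a cross-collision $a' + w' \equiv a + w_a \pmod{m}$ would likewise leave only finitely many. You instead pigeonhole the \emph{witnesses} $w^{(k)}$ themselves: the symmetric pair of inequalities $w^{(k_2)} \geq k_1 m + w^{(k_1)}$ and $w^{(k_1)} \geq k_2 m + w^{(k_2)}$, summing to $0 \geq (k_1+k_2)m$, shows at once that distinct witnesses lie in distinct residue classes, so some witness $w_a$ recurs for infinitely many $k$, and any fixed obstruction $(a',w)$ is then eliminated by sending $k \to \infty$. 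What your route buys is the complete disappearance of the paper's case analysis---whether $W$ is bounded below (or above) in a class never needs to be discussed, since your two-index inequality is self-defeating no matter how $W$ is distributed in $\Z$; your diagnosis that a single large $k$ cannot suffice (an enormous element of $W$ in the same class could spoil that one witness) is exactly the crux, and the recurring-witness device is a clean way around it. What the paper's route buys is a counting style (bounding how many elements of a fixed class can be dependent under each bad scenario) that matches the arguments it uses elsewhere, e.g., in Lemma \ref{lem: UBminimalsetonly} and Theorem \ref{thm: MAC to finite density}; its conclusion that $W$ meets the relevant class in a single element is the same fact your recurrence establishes, found by different means.
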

\begin{proof}
Suppose $C$ is a minimal additive complement to $W$, and fix $z \in [0,m)$. Observe that if there are infinitely many elements in $(z + m\zzz_{<0}) \cap W$, then no element of $z + m\zzz$ is dependent on any element of $C$.  If $(z + m\zzz) \cap W$ is bounded below and of size at least $2$, let $w_1$ and $w_2$ be the two smallest elements.  Then, for any $a \in A$, at most $|w_2 - w_1|$ elements of $a + z + m\zzz$ are dependent on $a + m\nnn \subseteq C$.  Similarly, if there exists $w,w' \in W$ and distinct $a, a' \in A$ such that $w + a, w' + a' \in z + m\zzz$, then at most $|(w + a) - (w' + a')|$ elements of $z + m\zzz$ are dependent on $a + m\nnn \subseteq C$.  Since, for each $a \in A$, there are infinitely many integers are dependent on $a + m\nnn \subseteq C$, there must exist $w_a \in W$ such that $(w_a + m\zzz) \cap W = \{w_a\}$ and no element of $(A\cut \{a\}) + (W \cut \{w_a\})$ is equivalent to $a + w_a$ modulo $m$.  Thus, $w_a$ satisfies that $a + w_a \not\in A + (W \cut \{w_a\}) + m\zzz$.
\end{proof}

\begin{theorem}\label{thm: ep necc}
Let $C = (m\nnn + A) \cup B \cup F$ be as in Def.~\ref{def: eventually periodic}.  If $C$ arises as a minimal additive complement, then there exists $ \overline{Y} = Y_+ \cup Y_- \cup Y_0 \subseteq \zzz/m\zzz$ such that
\begin{enumerate}
\item $\left(A_{(m)} + \overline{Y}\right) \cup \left(F_{(m)} + Y_+\right)  = (A_{(m)} \cup F_{(m)}) + Y_- = \zzz/m\zzz$;
\item for each $a \in A_{(m)}$, there exists $y_a \in Y_0$ such that 
$$a + y_a \in  (F_{(m)} + Y_-) \cut ((A_{(m)}\cut \{a\} + Y_0) \cup (A_{(m)} + (Y_- \cup Y_+))\,;$$
\item for each $f \in F_{(m)}$, there exists $y \in \overline{Y}$ such that $f + y \not\in A_{(m)} + Y_-$.
\end{enumerate}
\end{theorem}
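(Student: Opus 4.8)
The plan is to fix a set $W$ to which $C$ is a minimal additive complement and to read off $\overline{Y}$, together with its three pieces, from the coarse asymptotic behavior of $W$ modulo $m$. Since $m\nnn + A$ is bounded below and $B,F$ are finite, the set $C$ is bounded below; as $C + W = \zzz$, this forces $W$ to be unbounded below. I would then set $\overline{Y} := W_{(m)}$ and partition the nonempty residue classes of $W$ by declaring $Y_+$ (resp.\ $Y_-$) to be the set of residues $s$ for which $W \cap (s + m\zzz)$ is unbounded above (resp.\ below), and $Y_0$ to be the set of residues for which $W \cap (s + m\zzz)$ is finite. Every nonempty residue class of $W$ is finite or unbounded in at least one direction, so $\overline{Y} = Y_+ \cup Y_- \cup Y_0$, as required. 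Recall also that the uniqueness convention on $A$ makes $A \to A_{(m)}$ a bijection, so we may index the witnesses below by residues.

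Condition (1) is the asymptotic covering part, obtained by examining representations $z = c + w$ of integers $z$ of large absolute value. For $z \equiv r \pmod m$ very negative, boundedness below of $C$ forces $w$ very negative, so $w_{(m)} \in Y_-$ and $c_{(m)} \in A_{(m)} \cup F_{(m)}$; letting $z \to -\infty$ shows $r \in (A_{(m)} \cup F_{(m)}) + Y_-$, giving the second equality. For $z \equiv r \pmod m$ very positive I would split into cases: if some large such $z$ uses a summand $c \in m\nnn + A$, then $r \in A_{(m)} + \overline{Y}$; otherwise all sufficiently large such $z$ use $c \in B \cup F$, and a pigeonhole argument on the finite set $B \cup F$ produces a fixed $c^\ast$ used for arbitrarily large $z$, whence $r - (c^\ast)_{(m)} \in Y_+$ and $r \in (A_{(m)} \cup F_{(m)}) + Y_+$. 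Recalling $B_{(m)} \subseteq A_{(m)}$, both cases land in $(A_{(m)} + \overline{Y}) \cup (F_{(m)} + Y_+)$, yielding the first equality.

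For condition (2) I would invoke Lemma \ref{lem: single equiv class}, which supplies for each $a \in A$ an element $w_a \in W$ with $(w_a + m\zzz) \cap W = \{w_a\}$ and $a + w_a \notin A + (W \cut \{w_a\}) + m\zzz$. Setting $y_a := (w_a)_{(m)}$, the singleton condition places $y_a \in Y_0$ and gives $(W \cut \{w_a\})_{(m)} = \overline{Y} \cut \{y_a\}$, so the Lemma reads $a + y_a \notin A_{(m)} + (\overline{Y} \cut \{y_a\})$. Since $Y_- \cup Y_+ \subseteq \overline{Y} \cut \{y_a\}$, this immediately gives $a + y_a \notin A_{(m)} + (Y_- \cup Y_+)$, and a short check (splitting on whether the $Y_0$-summand equals $y_a$) gives $a + y_a \notin (A_{(m)} \cut \{a\}) + Y_0$. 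Finally, the second equality of (1) puts $a + y_a$ in $(A_{(m)} \cup F_{(m)}) + Y_-$, and since it is not in $A_{(m)} + Y_-$ it must lie in $F_{(m)} + Y_-$; collecting these facts is exactly (2).

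Condition (3) encodes minimality at the elements of $F$: for $f \in F_{(m)}$, choose $\phi \in F$ with residue $f$ and a dependent element $z$ of $\phi$, so $z = \phi + w$ for some $w \in W$ while $z \notin (C \cut \{\phi\}) + W$. I would take $y := w_{(m)} \in \overline{Y}$ and argue by contradiction: if $f + y = z_{(m)} \in A_{(m)} + Y_-$, say $z \equiv a' + s \pmod m$ with $a' \in A_{(m)}$ and $s \in Y_-$, then unboundedness below of the residue $s$ yields arbitrarily negative $w' \in W$ with $(w')_{(m)} = s$; for $w'$ negative enough, $z - w'$ is large and positive with residue $a' \in A_{(m)}$, hence lies in $m\nnn + A \subseteq C \cut \{\phi\}$ (its residue is disjoint from $F_{(m)}$, so it cannot equal $\phi$), giving $z = (z - w') + w' \in (C \cut \{\phi\}) + W$, a contradiction. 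Thus $f + y \notin A_{(m)} + Y_-$. I expect the main work to be organizational rather than conceptual: keeping the three residue-behavior classes, the constraints $B_{(m)} \subseteq A_{(m)}$ and $F_{(m)} \cap A_{(m)} = \emptyset$, and the unique-representative bookkeeping around $w_a$ all straight, and making the pigeonhole-at-infinity arguments in (1) and (3) fully rigorous.
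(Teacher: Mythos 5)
Your proposal is correct and follows essentially the same route as the paper: you define $Y_+$, $Y_-$, $Y_0$ by the asymptotic behavior of the residue classes of $W$ (exactly the paper's decomposition), prove Condition (1) by analyzing representations of integers near $\pm\infty$, obtain Condition (2) from Lemma \ref{lem: single equiv class} exactly as the paper does, and prove Condition (3) via the observation that no element of $A_{(m)} + Y_- + m\Z$ can be a dependent element. If anything, your write-up fills in details the paper leaves implicit (the pigeonhole arguments behind the set equalities in (1), and the verification that a dependent element of $f \in F$ cannot lie in $A_{(m)} + Y_- + m\Z$), so no changes are needed beyond the organizational polishing you already flag.
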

\begin{proof}
Suppose that $C$ arises as a minimal additive complement to $Y \subseteq \zzz$.  Define
\begin{align*}
    Y_+ &= \{z \in \zzz/m\zzz : (m\nnn + z) \cap Y \text{ is infinite}\}\,,\\
    Y_- &= \{z \in \zzz/m\zzz : (m\zzz_{<0} + z) \cap Y \text{ is infinite}\}\,, \text{ and}\\
    Y_0 &= \{z \in \zzz/m\zzz : (m\zzz + z) \cap Y \text{ is nonempty and finite}\}\,.
\end{align*}
Observe that 
$$\{z \in \zzz/m\zzz: (m\nnn + z) \cap (C + Y) \text{ is infinite}\}= \left(A_{(m)} + \overline{Y}\right) \cup \left(F_{(m)} + Y_+\right)\,.$$    Similarly,  the set $\{z \in \zzz/m\zzz: (m\zzz_{<0} + z) \cap (C + Y) \text{ is infinite}\}$ is equal to $(A_{(n)} \cup F_{(n)}) + Y_-$.  Since $C$ is an additive complement to $Y$, these sets are all equal to $\zzz/m\zzz$.  Thus, Condition (1) is satisfied.

Fix $a \in A$. By Lemma \ref{lem: single equiv class}, there exists $y \in Y$ such that $a + y \not\in A + Y \cut \{y_a\} + m\Z$.  Letting $y_a = \{y\}_{(m)}$, this implies $y_a \not\in((A_{(m)}\cut \{a\} + Y_0) \cup (A_{(m)} + (Y_- \cup Y_+))$.  By Condition (1), we must have $y_a \in F_{(m)} + Y_-$. Therefore $y \in (F_{(m)} + Y_-) \cut ((A_{(m)}\cut \{a\} + Y_0) \cup (A_{(m)} + (Y_- \cup Y_+))$.

Fix $f \in F$ and $y' \in Y$ such that the integer $f + y'$ is dependent on $f$.  Observe that no element of $A_{(m)} + Y_- + m\zzz$ is dependent on any element of $C$, in particular on any element of $F$.  Setting $y = \{y'\}_{(m)}$, we can conclude that $f + y \not\in A_{(m)} + Y_-$.
\end{proof}

Combining this with the density bound of Theorem \ref{lem: UBminimalsetonly} from the previous section, we obtain the following corollary.

\begin{corollary}\label{cor: ev per density}
Let $C = (m\nnn + A) \cup B \cup F$ be as in Def.~\ref{def: eventually periodic}.  If $C$ arises as a minimal additive complement, then $2|A_{(m)}| \leq m + |F_{(m)}|$.
\end{corollary}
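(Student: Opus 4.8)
The plan is to feed the witness set produced by Theorem~\ref{thm: ep necc} into a short double-counting argument inside the finite group $G = \zzz/m\zzz$, whose order is $m$. Concretely, I would extract $\overline{Y} = Y_+ \cup Y_- \cup Y_0 \subseteq \zzz/m\zzz$ satisfying Conditions (1)--(3), and then exhibit two subsets of $G$ that are forced to be disjoint and each of size at least $|A_{(m)}|$; comparing the size of their union against $m$ gives the inequality. This is the finite-group incarnation of the minimal-complement counting behind Lemma~\ref{lem: UBminimalsetonly}, where the dependent elements of a minimal complement are shown to lie outside a translate of the complement.

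First I would record the dependent elements supplied by Condition (2). For each $a \in A_{(m)}$ fix the element $y_a \in Y_0$ guaranteed there and set $T = \{\, a + y_a : a \in A_{(m)} \,\} \subseteq G$. The clause $a + y_a \notin (A_{(m)} \cut \{a\}) + Y_0$ forces the assignment $a \mapsto a + y_a$ to be injective: a coincidence $a + y_a = a' + y_{a'}$ with $a \neq a'$ would place the common value in $(A_{(m)} \cut \{a\}) + Y_0$, contradicting the defining property of $a + y_a$. Hence $|T| = |A_{(m)}|$. The remaining clause, $a + y_a \notin A_{(m)} + (Y_- \cup Y_+)$, gives in particular $a + y_a \notin A_{(m)} + Y_-$, so $T$ is disjoint from the sumset $A_{(m)} + Y_-$.

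Next I would bound $|A_{(m)} + Y_-|$ from below. Condition (1) gives $(A_{(m)} \cup F_{(m)}) + Y_- = \zzz/m\zzz$, which is nonempty, so $Y_- \neq \emptyset$; and $A_{(m)} \neq \emptyset$ since $A$ is nonempty. Fixing any $y \in Y_-$, the translate $A_{(m)} + y \subseteq A_{(m)} + Y_-$ has size $|A_{(m)}|$, whence $|A_{(m)} + Y_-| \geq |A_{(m)}|$. As $T$ and $A_{(m)} + Y_-$ are disjoint subsets of $G$, I conclude
\[
2|A_{(m)}| \leq |T| + |A_{(m)} + Y_-| = |T \cup (A_{(m)} + Y_-)| \leq |G| = m \leq m + |F_{(m)}|\,,
\]
which is the claimed bound.

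The one point requiring care is the correct parsing of Condition (2): the injectivity of $T$ and its disjointness from $A_{(m)} + Y_-$ rest on the two exclusion clauses simultaneously, so it is essential that the set excluded in Condition (2) covers both $(A_{(m)} \cut \{a\}) + Y_0$ and $A_{(m)} + Y_-$ (rather than merely $A_{(m)} + Y_0$). I would also remark that this argument actually yields the sharper inequality $2|A_{(m)}| \leq m$; the extra term $|F_{(m)}|$ is a dispensable cushion, of the sort one is left with if the bound is instead read off coarsely from the counting in Lemma~\ref{lem: UBminimalsetonly} applied to $A_{(m)} \cup F_{(m)}$, and it suffices for the stated corollary.
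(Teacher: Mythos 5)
Your proof is correct, but it takes a genuinely different route from the paper's. The paper's proof feeds the pair $(A_{(m)}, \overline{Y})$ into Lemma~\ref{lem: UBminimalsetonly}, obtaining $|A_{(m)}| \leq \frac{|\overline{Y}|}{2|\overline{Y}|-1}m$, then extracts $|\overline{Y}| \geq |A_{(m)}|/|F_{(m)}|$ from Condition (2) (the distinct sums $a + y_a$ all lie in $F_{(m)} + Y_-$), and finally uses the monotonicity of $t/(2t-1)$ to substitute, arriving at $2|A_{(m)}| \leq m + |F_{(m)}|$. You bypass Lemma~\ref{lem: UBminimalsetonly} and the optimization step entirely: injectivity of $a \mapsto a + y_a$ gives $|T| = |A_{(m)}|$, the exclusion clauses of Condition (2) make $T$ disjoint from $A_{(m)} + Y_-$, and nonemptiness of $Y_-$ (forced by Condition (1)) gives $|A_{(m)} + Y_-| \geq |A_{(m)}|$, so counting disjoint subsets of $\Z/m\Z$ yields the sharper bound $2|A_{(m)}| \leq m$. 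This is essentially the counting the paper itself deploys later, in the proof of Theorem~\ref{thm: add necc}, where distinctness of the sums $a + y_a$ and their avoidance of $A_{(m)} + Y_-$ give $m \geq |A_{(m)}| + |A_{(m)} + Y_-|$; there the lower bound on $|A_{(m)} + Y_-|$ comes from Cauchy--Davenport (hence the primality hypothesis), whereas you use only the trivial bound $|A_{(m)} + Y_-| \geq |A_{(m)}|$, which holds for every $m$. What each approach buys: the paper's route exhibits the corollary as an application of the density machinery of Section~\ref{sec: finite}, while yours is more elementary and delivers a strictly stronger conclusion whenever $F_{(m)} \neq \emptyset$. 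That strengthening is legitimate: your reading of Condition (2) --- that the excluded set contains all of $A_{(m)} + (Y_- \cup Y_+)$, not merely $(A_{(m)} \setminus \{a\}) + (Y_- \cup Y_+)$ --- is exactly what the proof of Theorem~\ref{thm: ep necc} via Lemma~\ref{lem: single equiv class} establishes, and it is the same reading the proof of Theorem~\ref{thm: add necc} relies on. Your bound $2|A_{(m)}| \leq m$ is tight (e.g.\ the sets of Proposition~\ref{prop: evens odds}, with $m = 2$ and $|A_{(m)}| = 1$), is consistent with all the examples in the paper (in Theorem~\ref{thm: ep suff}, $2|A_{(m)}| = 2(m+1)/3 \leq m$), and still suffices for the paper's later application, the robust non-MAC example $4\N + \{0,1,2\}$ in Section~\ref{sec: further}.
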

\begin{proof}
Let $C$ be a minimal additive complement to $Y \subseteq \zzz$, and define $Y_+$, $Y_-$, and $Y_0$ as in the proof of Theorem \ref{thm: ep necc}.  Condition (2) of Theorem \ref{thm: ep necc} implies that $A_{(m)}, \overline{Y} \subseteq \Z/m\Z$ satisfy the relation $(A_{(m)} \cut \{a\}) + \overline{Y} \neq A_{(m)} + \overline{Y}$ for each $a \in A_{(m)}$.  Applying Lemma \ref{lem: UBminimalsetonly}, we have 
\begin{equation}\label{eq: density}
    |A_{(m)}| \leq \frac{|\overline{Y}|}{2|\overline{Y}| - 1}m\,.
\end{equation}
Moreover, Condition (2) of Theorem \ref{thm: ep necc} implies that $|F_{(m)}|\cdot |\overline{Y}| \geq |F_{(m)} + Y_-| \geq |A_{(m)}|$, hence $|\overline{Y}| \geq |A_{(m)}|/|F_{(m)}|$.  Since $|\overline{Y}|/(2|\overline{Y}| - 1)$ is a decreasing function in $|\overline{Y}| \in \N$, then substituting $|A_{(m)}|/|F_{(m)}|$ for $|\overline{Y}|$ in \eqref{eq: density} yields the desired inequality.
\end{proof}

Using a similar approach of bounding the relative sizes of $|A_{(m)}|$ and $|F_{(m)}|$, we can derive a stronger bound when $m$ is prime.

\begin{theorem}\label{thm: add necc}
Let $C = (m\nnn + A) \cup B \cup F$ be as in Def.~\ref{def: eventually periodic}.  If $m$ is prime and $C$ arises as a minimal additive complement, then 
$$|A_{(m)}| \leq \frac{|F_{(m)}|}{2|F_{(m)}| + 1} (m+1)\,.$$

Moreover, if $|A_{(m)}| > \frac{|F_{(m)}|}{2|F_{(m)}| + 1}m$, then 
\begin{enumerate}
    \item[(4)] for each $a \in A_{(m)}$, $(m\Z + a) \cap (\Z \cut C) = (m \Z + a) \cap (F + W)$ for some $W \subseteq \Z$. 
\end{enumerate} 
\end{theorem}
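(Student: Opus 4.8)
The plan is to run the same counting as in Corollary \ref{cor: ev per density}, but to replace the crude product bound on $|\overline{Y}|$ by the Cauchy--Davenport inequality, which is available precisely because $m$ is prime. Suppose $C$ is a minimal additive complement to $Y \subseteq \zzz$, and define $Y_+, Y_-, Y_0$ and $\overline{Y} = Y_+ \cup Y_- \cup Y_0$ exactly as in the proof of Theorem \ref{thm: ep necc}. Write $a = |A_{(m)}|$ and $f = |F_{(m)}|$. For each $\alpha \in A_{(m)}$, Condition (2) of Theorem \ref{thm: ep necc} furnishes a witness $z(\alpha) = \alpha + y_\alpha$ with $y_\alpha \in Y_0$ satisfying $z(\alpha) \in F_{(m)} + Y_-$ and $z(\alpha) \notin A_{(m)} + Y_-$. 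The same condition forces the map $\alpha \mapsto z(\alpha)$ to be injective: if $z(\alpha) = z(\alpha')$ for $\alpha \neq \alpha'$, then $z(\alpha) = \alpha' + y_{\alpha'} \in (A_{(m)}\setminus\{\alpha\}) + Y_0$, contradicting Condition (2). Hence $Z = \{z(\alpha) : \alpha \in A_{(m)}\}$ has exactly $a$ elements.

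First I would extract two inequalities from the location of $Z$. Writing $P = A_{(m)} + Y_-$, the set $Z$ is disjoint from $P$, so $a = |Z| \leq m - |P|$, that is, $|P| \leq m - a$. Since $m$ is prime, the Cauchy--Davenport inequality gives $|P| = |A_{(m)} + Y_-| \geq \min(m,\, a + |Y_-| - 1)$; as $|P| \leq m - a < m$, this yields $a + |Y_-| - 1 \leq m - a$, i.e.
\[ 2a + |Y_-| \leq m + 1. \]
On the other hand, $Z \subseteq F_{(m)} + Y_-$ gives the product bound $a = |Z| \leq |F_{(m)} + Y_-| \leq f\,|Y_-|$, so $|Y_-| \geq a/f$. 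Combining the two, $a/f \leq |Y_-| \leq m + 1 - 2a$, and clearing denominators rearranges to $a(2f+1) \leq f(m+1)$, which is the claimed bound.

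For the ``moreover'' statement, the starting point is that the hypothesis $a > \frac{f}{2f+1}m$ forces both inequalities above to be tight. Indeed $a(2f+1) > fm$ is equivalent to $a/f > m - 2a$, so $m - 2a < |Y_-| \leq m+1-2a$ pins down $|Y_-| = m+1-2a$; consequently Cauchy--Davenport holds with equality, $|A_{(m)} + Y_-| = m - a$, and $Z$ must be exactly the complement $(\zzz/m\zzz)\setminus (A_{(m)} + Y_-)$. The plan is then to transport this residue-level rigidity back to $\zzz$. For fixed $a_0 \in A_{(m)}$, the set $(m\zzz + a_0)\cap(\zzz\setminus C)$ is bounded above, and I would track how each of its elements is covered in the sum $C + Y = \zzz$: the periodic part $m\nnn + A$ covers the far-negative portion of class $a_0$ infinitely often only when $a_0 \in A_{(m)} + Y_-$, while $B$ is finite and hence negligible toward $-\infty$. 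Using that the witness classes $Z$ exhaust the complement of $A_{(m)} + Y_-$ together with Condition (1), I would argue that the missing part of class $a_0$ is, up to a finite adjustment absorbable into the choice of $W$, covered exactly by translates of $F$; choosing $W\subseteq\zzz$ accordingly gives $(m\zzz + a_0)\cap(\zzz\setminus C) = (m\zzz+a_0)\cap(F + W)$.

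The main obstacle is this last transport step. The clean equalities hold in $\zzz/m\zzz$, whereas Condition (4) is an equality of genuinely infinite subsets of $\zzz$, so the argument must reconcile the cyclic-group accounting with the finitely many discrepancies coming from $B$, from the boundary between the positive and negative parts of each class, and from possible repeated residues within $F$. Isolating exactly which elements of $Y$ contribute to covering class $a_0$, and verifying that they produce precisely the missing set rather than an over- or under-count, is where the care is needed; the residue-level tightness derived above is what makes this bookkeeping feasible.
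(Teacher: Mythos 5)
Your proof of the displayed inequality is correct and is essentially the paper's own argument: the witnesses $a+y_a$ supplied by Condition (2) of Theorem \ref{thm: ep necc}, their injectivity and disjointness from $A_{(m)}+Y_-$, Cauchy--Davenport applied to $A_{(m)}+Y_-$ (legitimate since $m$ is prime, $Y_-\neq\emptyset$, and $A_{(m)}+Y_-\neq\Z/m\Z$), and the estimate $|Y_-|\geq |A_{(m)}|/|F_{(m)}|$. Your derivation of the rigidity forced by $|A_{(m)}|>\frac{|F_{(m)}|}{2|F_{(m)}|+1}m$ --- that $|A_{(m)}+Y_-|=m-|A_{(m)}|$ and that the witness classes exhaust the complement of $A_{(m)}+Y_-$ --- also agrees with the paper.

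The gap is in the ``moreover'' part: your treatment of Condition (4) is a plan, not a proof, and the step you defer (``transporting the residue-level rigidity back to $\Z$'') is the entire content of that condition; you flag this yourself as ``the main obstacle.'' Concretely, three ingredients are missing, and they are exactly what the paper supplies. (i) For fixed $a\in A_{(m)}$, \emph{every} element of $C\cap(m\Z+a)$ must have its dependent element in the single class $m\Z+a+y_a$: no dependent element can lie over $A_{(m)}+Y_-$ (each integer in those classes is represented infinitely often using $m\N+A$ and the infinitely many negative elements of $Y$ in a $Y_-$ class), and among the remaining classes $\{a''+y_{a''}: a''\in A_{(m)}\}$ only $a+y_a$ is reachable from class $a$, because Condition (2) gives $a''+y_{a''}\notin (A_{(m)}\setminus\{a''\})+\overline{Y}$ when $a''\neq a$. (ii) Consequently $Y$ contains a \emph{unique} element $y\equiv y_a \pmod{m}$: if there were two, say $y<y'$, then for every sufficiently large $c$ in the progression $m\N+A$ inside class $a$, each candidate $c+y''$ with $y''\in Y\cap(y_a+m\Z)$ admits a second representation (replace $c$ by $c+y''-y$ or by $c+y-y'$, which for large $c$ again lies in the progression), so $c$ would have no dependent element, contradicting minimality. (iii) With $y$ unique, each point of $(C\cap(m\Z+a))+y$ is covered exactly once in $C+Y$; hence $((m\Z+a)\setminus C)+y$ is covered \emph{precisely} by $(C\setminus(m\Z+a))+(Y\setminus\{y\})$, and inside this the contributions of $m\N+A$ and of $B$ are bounded below (otherwise $a+y_a\in A_{(m)}+Y_-$, using $B_{(m)}\subseteq A_{(m)}$), so they meet the bounded-above set $((m\Z+a)\setminus C)+y$ in only finitely many points. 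Only after (i)--(iii) is the ``finite adjustment absorbable into the choice of $W$'' that you invoke actually available, yielding $(m\Z+a)\cap(\Z\setminus C)=(m\Z+a)\cap(F+W)$. Without these steps, the second assertion of the theorem remains unproved.
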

\begin{proof}
We begin by proving the upper bound on $|A_{(m)}|$.  Suppose $C$ arises as a minimal additive complement to $Y \subseteq \Z$, and define $Y_+$, $Y_-$, and $Y_0$ as in the proof of Theorem \ref{thm: ep necc}.  By the Cauchy-Davenport theorem, we have $|A_{(m)} + Y_-| \geq \min\{|A_{(m)}| + |Y_-| - 1, m\}$.  Note that no element of $\Z$ is dependent on any element of $(A_{(m)} + Y_-) + m\Z$. Thus we cannot have $|A_{(m)}| + |Y_-| - 1 > m$, otherwise $A_{(m)} + Y_- = \Z/m\Z$.  So we in fact have $|A_{(m)} + Y_-| \geq |A_{(m)}| + |Y_-| - 1$.

By Condition (2) of Theorem \ref{thm: ep necc}, for each $a \in A_{(m)}$, there exists an element $y_a \in Y_0$ such that $a + y_a \notin (A_{(m)} + Y_-) \cup (A_{(m)} \cut \{a\} + Y)$.  Moreover, the same result implies that we  have $|F_{(m)} + Y_-| \geq |A_{(m)}|$, hence $|F_{(m)}||Y_-| \geq |A_{(m)}|$.  Combining these inequalities, we have
$$|A_{(m)} + Y_-| \geq |A_{(m)}| + |Y_-| - 1 \geq |A_{(m)}| + |A_{(m)}|/|F_{(m)}| - 1$$
Since the sum $a + y_a$ is distinct for each $a \in A_{(m)}$ and is not contained in $A_{(m)} + Y_-$, we have $m \geq |A_{(m)}| + |A_{(m)} + Y_-|$, yielding
$$|A_{(m)}| \leq \frac{|F_{(m)}|}{2|F_{(m)}| + 1} (m+1)\,.$$

On the other hand, if $|A_{(m)}| > \frac{|F_{(m)}|}{2|F_{(m)}| + 1}m$, then we must have $m = |A_{(m)}| + |A_{(m)} + Y_-|$.  Thus for each $a \in A_{(m)}$, there is a unique $y_a \in Y_0$ such that $a + y_a \notin A_{(m)} + Y_-$.  In particular, every element of $C \cap (m\Z + a)$ must have a dependent element in $m\Z + a + y_a$.  So there is a unique element $y \in Y \cap (y_a + m\Z)$, and every element of $(C + y) \cap (m\Z + a)$ is covered exactly once in $C + Y$, in particular by $C + y$.  Therefore, there must exist $W \subseteq \Z$ such that 
$$(m\Z + a + y) \cap (\Z \cut C + y) = (m\Z + a + y) \cap (C \cut (C \cap (m\Z + a)) + W)\,.$$
Since there are finitely many positive elements of $(m\Z + a + y) \cap (\Z \cut C + y)$, then in fact, translating by $y$, we have
$$(m\Z + a) \cap (\Z \cut C) = (m\Z + a) \cap (F + W)\,.\text{\qedhere}$$
\end{proof}

We now shift our focus to showing that Conditions (1)-(3) of Theorem \ref{thm: ep necc} as well as Condition (4) of Theorem \ref{thm: add necc} are sufficient for certain eventually periodic sets to arise as minimal additive complements.  We first consider a lemma whose proof is analogous to that of Lemma \ref{lem: finite rep}. 

\begin{lemma}\label{lem: finite rest cover}
Fix a finite set $F \subset \zzz$ and $W \subseteq \Z$ such that $F + W \supset \N$.  Then there exists a set $W' \subseteq \Z$ such that $F + W' = F + W$ and $F' + W' \neq F + W$ for any proper $F' \subset F$.  
\end{lemma}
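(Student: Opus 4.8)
The plan is to mirror the reduction behind Lemma~\ref{lem: finite rep}, exploiting that the finiteness of $F$ makes its finite-representation hypothesis automatic. Write $U = F + W$ and note that, since $|F| < \infty$, every $z \in U$ admits at most $|F|$ representations as $f + w$ with $f \in F$ and $w \in W$; thus every integer is finitely $(F,W)$-represented, which is exactly the condition driving Lemma~\ref{lem: finite rep}. Observe also that any admissible $W'$ must satisfy $W' \subseteq W^\ast := \{z \in \Z : F + z \subseteq U\}$, since otherwise $F + W'$ would strictly contain $U$; as $W \subseteq W^\ast$ and $F + W^\ast = U$, we are free to search for $W'$ inside $W^\ast$. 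Ultimately we must produce $W' \subseteq W^\ast$ satisfying (a) $F + W' = U$ and (b) a dependent element for each $f \in F$ with respect to the sum $F + W'$.

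First I would secure condition (a) by a greedy removal identical in spirit to the proof of Lemma~\ref{lem: finite rep}: well-order $W^\ast = \{w_1, w_2, \dots\}$, set $W^{(0)} = W^\ast$, and at stage $i$ delete $w_i$ precisely when $F + (W^{(i-1)} \cut \{w_i\}) = U$ still holds, letting $W'$ be the surviving elements. Finite representation guarantees $F + W' = U$: for any $z \in U$ the finitely many elements of $W^\ast$ covering $z$ cannot all be deleted, since at the moment the last one is deleted it is the only remaining cover of $z$, so the deletion condition fails. The same bookkeeping shows $W'$ is in fact a minimal complement to $F$, each surviving $w$ retaining a privately covered element.

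The main obstacle is condition (b): unlike the situation in Lemma~\ref{lem: finite rep}, the set we are forbidden to shrink is $F$, so its minimality must be \emph{created} rather than merely preserved. Here I would use that $\N \subseteq U$ forces $W'$ to be right-syndetic with gaps at most $\max F - \min F$, since covering a large integer $m$ requires $W' \cap [m - \max F, m - \min F] \neq \emptyset$. I expect the following dichotomy to settle it. If $W'$ tiles a cofinite right-segment of $\Z$ under $F$, each large integer having a unique representation, then each $f \in F$ automatically covers infinitely many integers privately and (b) holds. Otherwise arbitrarily large integers carry at least two representations, giving slack: one can then reserve, far out and far apart, a single privately covered witness $m_f$ for each $f \in F$ by deleting from $W'$ the finitely many conflicting points $m_f - f'$ with $f' \in F \cut \{f\}$, the abundance of coverage near $m_f$ ensuring that (a) survives these deletions and the wide spacing ensuring the witnesses do not interfere. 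Making this dichotomy precise -- in particular, ruling out that some $f \in F$ remains redundant in a minimal, right-syndetic complement, which a direct shift-closure argument handles in small cases (for $F = \{0,\max F\}$, redundancy would force both $w - \max F \in W'$ and $w + \max F \notin W'$ for every $w \in W'$, a contradiction) -- is the step I expect to require the most care.
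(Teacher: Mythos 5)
Your step (a) and your Case A are both sound, but Case B contains a self-defeating contradiction. After the greedy removal you correctly note that $W'$ is a \emph{minimal} complement to $F$: every $w \in W'$ retains a privately covered integer, i.e.\ $F + (W' \cut \{w\}) \neq U$ for every $w \in W'$. Consequently there is no ``abundance of coverage'' anywhere in $W'$ --- deleting \emph{any} element of $W'$, in particular any conflicting point $m_f - f'$ that lies in $W'$, immediately destroys condition (a). The two halves of your argument cannot coexist. Moreover, the situation you hoped to rule out genuinely occurs once $|F| \geq 3$: take $F = \{0,1,2\}$ and $U = \Z$ (e.g.\ $W = \Z$), and $W' = \{0,1\} + 4\Z$. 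Every element of $W'$ is essential (each $4k$ is the unique cover of $4k$, each $4k+1$ the unique cover of $4k+3$), so $W'$ has exactly the properties your step (a) guarantees; we are in Case B (integers $\equiv 1,2 \pmod 4$ have two representations); and $f = 1$ is redundant, since $\{0,2\} + W' = \Z$. Indeed every integer covered via $1$ is also covered via $0$ or $2$, so any witness for $f = 1$ forces deletions, yet every deletion uncovers some integer. Your shift-closure argument is correct for $|F| = 2$ but does not extend.

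The repair is to perform the Case B surgery on a \emph{redundant} set rather than on the minimized $W'$ --- and this is exactly what the paper does, with no dichotomy and no minimization (the lemma never asks $W'$ to be a minimal complement to $F$). Translate so that $f_1 = \min F = 0$; then $F + \N \subseteq \N \subseteq F + W$, so replacing $W$ by $W \cup \N$ leaves the sumset equal to $F + W$ while giving every sufficiently large integer $|F|$ representations (equivalently, your $W^{\ast}$ already contains the half-line $\Z_{\geq 0}$). Now choose the well-separated witnesses $m_{f_i} = 2 i f_k$ and delete precisely the conflicting points: the paper's set is $W' = (W \cup \nnn) \cut \bigl( \bigcup_{i=1}^{k} (-F_i + 2 i f_k) \bigr)$, where $F_i = F \cut \{f_i\}$. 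Each witness $2 i f_k$ is then covered only via $f_i$, which yields $F' + W' \neq F + W$ for every proper $F' \subset F$, and the separation of the witness blocks together with the half-line of spare covers shows $F + W' = F + W$. Your finite-representation and syndeticity observations are correct but are not needed for this construction.
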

\begin{proof}
Let $F = \{f_1 < \cdots < f_k\}$, and $F_i = F \cut \{f_i\}$.  By performing an appropriate translation on $F$ and $W$, we can assume $f_1 = 0$.  Let 
$$W' = (W \cup \nnn) \cut \left( \bigcup_{i = 1}^k (-F_i + 2i \cdot f_k) \right)\,.$$
Observe that $2i \cdot f_k - f_j$ is contained in $W'$ if and only if $i = j$.  Thus $2i \cdot f_k \notin F\cut\{f_i\} + W'$.  So $F' + W' \not= F + W'$ for any proper subset $F' \subset F$.  

Secondly, we show that $F + W' = F + W$.  Since we assumed $f_1 = 0$ and $F + W \supseteq \N$, we have $F + W' \subseteq F + (W \cup \N) \subseteq F + W$.  For the other inclusion, we have 
$$F + W' \supseteq (F + W) \cup (F + \N) =  (F + W) \cup \N = F + W\,.\text{\qedhere}$$
\end{proof}

\begin{remark}\label{rem: arise but don't have MAC}
This lemma is a useful tool in showing that certain eventually periodic sets arise as minimal additive complements, as we shall see in the following theorem. 

One particular example includes sets of the form $3\N \cup F$ where $F = \{f_1 < \dots < f_n\}$ is a finite subset of $3\N + 1$.  Note that $\{0\}  \cup (3\Z + 1)\cup (-3\Z_{\geq 0} - f_n)$ is a complement to $3\N \cup F$.  Moreover, note that $F + (-3\Z_{\geq 0}  - f_n) = -3\Z_{\geq 0}$, so an application of Lemma \ref{lem: finite rest cover} (along with an appropriate scaling and shift) yields a set $W' \subseteq 3\Z + 2$ such that $W' + F = -3\Z_{\geq 0}$ and $W' + F' \neq -3\Z_{\geq 0}$.  It is then straightforward to check that $3\N + F$ arises as a minimal additive complement to  $\{0\} \cup (3\Z + 1)\cup W'$.  

However, no set of the form $3\N \cup F$ where $F \subseteq 3\N + 1$ has as minimal additive complement by Corollary 1 of \cite{KSY}.  Thus, this constitutes a class of sets which arise as, but do not have, minimal additive complements.
\end{remark}

\begin{theorem}\label{thm: ep suff}
Let $C = (m\nnn + A) \cup B \cup F$ be as in Def.~\ref{def: eventually periodic}.  Suppose $m$ is a prime which is equivalent to $2$ modulo $3$, $|F| = 1$, and $|A| = (m+1)/3$.  If Conditions (1) - (3) of Theorem \ref{thm: ep necc} and Condition (4) of Theorem \ref{thm: add necc} all hold, then $C$ arises as a minimal additive complement.
\end{theorem}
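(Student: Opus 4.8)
The plan is to prove the converse by explicitly constructing a minimal additive complement $Y \subseteq \Z$ from the combinatorial data guaranteed by Conditions (1)--(4). Write $F = \{f\}$, and let $\overline{Y} = Y_+ \cup Y_- \cup Y_0$ be the set in $\Z/m\Z$ supplied by Theorem \ref{thm: ep necc}, with the elements $y_a$ of Condition (2). The first step is to extract the \emph{rigid extremal structure} forced by $|A| = (m+1)/3$ being exactly the bound of Theorem \ref{thm: add necc}. Since $m$ is prime, Cauchy--Davenport gives $|A_{(m)} + Y_-| \geq |A_{(m)}| + |Y_-| - 1$; Condition (2) makes the sums $a + y_a$ pairwise distinct, disjoint from $A_{(m)} + Y_-$, and contained in $F_{(m)} + Y_-$, whence $|Y_-| \geq |A_{(m)}|$ and $|A_{(m)}| + |A_{(m)} + Y_-| \leq m$. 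Chaining these with $|A_{(m)}| = (m+1)/3$ (an integer precisely because $m \equiv 2 \bmod 3$) collapses every inequality to an equality, yielding $|Y_-| = |A_{(m)}|$, $|A_{(m)} + Y_-| = (2m-1)/3$, and, most importantly,
$$\{a + y_a : a \in A_{(m)}\} = (\Z/m\Z) \setminus (A_{(m)} + Y_-)\,.$$
In particular $A_{(m)} + (Y_- \cup Y_0) = \Z/m\Z$, so the positive direction can be covered \emph{without using $Y_+$ at all}.

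Next I would build $Y$ by lifting only $Y_- \cup Y_0$. Each $z \in Y_0$ is lifted to a single integer $\hat y_z$; for the distinguished elements $y_a$ these lifts are taken sufficiently large and positive that the upward progression $(m\nnn + a) + \hat y_{y_a}$ occupies the top of the residue class $a + y_a$. Each $z \in Y_-$ is lifted to a downward arithmetic progression, with the progressions chosen---via Condition (4) and Lemma \ref{lem: finite rest cover}---so that $F + Y_- = \{f\} + Y_-$ covers the negative complement $(\Z \setminus C) \cap (m\Z + a)$ of each $A$-class exactly once, while $(m\nnn + A) + Y_-$ fills the classes of $A_{(m)} + Y_-$ entirely in both directions. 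A finite set of extra elements is appended, if necessary, to cover the finitely many remaining ``middle'' integers, placed so as not to doubly cover any designated dependent element (or pruned afterward by Lemma \ref{lem: finite rest cover}). The identity $C + Y = \Z$ then follows by cases: positive tails from $A_{(m)} + (Y_- \cup Y_0) = \Z/m\Z$, negative tails of the $A_{(m)} + Y_-$-classes from $(m\nnn+A)+Y_-$ and of the remaining classes $\{a+y_a\}$ from $F + Y_-$, and the middle from the finite patch.

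The heart of the argument is verifying minimality, i.e.\ that every element of $C$ has a dependent element. For $c \in m\nnn + a$, the candidate is $c + \hat y_{y_a}$, which lies in class $a + y_a$. By the extremal structure this class meets neither $A_{(m)} + Y_-$ nor $(A_{(m)} \setminus \{a\}) + Y_0$, and because we omitted $Y_+$ its only upward coverage is $(m\nnn + a) + \hat y_{y_a}$; taking $\hat y_{y_a}$ large forces $c + \hat y_{y_a}$ above the finitely many stray sums coming from $B + Y_0$ and $F + Y_0$ and above the $F + Y_-$-coverage of the negative part, so $c + \hat y_{y_a}$ is represented uniquely and is therefore dependent on $c$. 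The elements of $B$ are handled identically using $B_{(m)} \subseteq A_{(m)}$. For $f$ itself, Condition (3) locates a class whose negative part is covered solely by $f + Y_-$; since $|F| = 1$ and this covering is exact (one representation per element), any such integer is dependent on $f$.

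I expect the main obstacle to be the bookkeeping in the minimality step: one must simultaneously guarantee that the exact, once-only covering of the negative holes by $f + Y_-$ (needed for the dependent element of $f$) remains compatible with the large lifts $\hat y_{y_a}$ and with covering the finitely many middle integers, without ever creating a second representation of a designated dependent element. Establishing the rigid identity $\{a + y_a\} = (\Z/m\Z) \setminus (A_{(m)} + Y_-)$---which is exactly what makes $Y_+$ dispensable and pins down the upward coverage of each isolated class to a single source---is the structural linchpin on which this bookkeeping rests, and it is where primality of $m$ and the extremal value of $|A|$ are both essential.
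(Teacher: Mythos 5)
Your proposal is correct and follows essentially the same route as the paper's proof: the extremal identity $\{a + y_a : a \in A_{(m)}\} = (\Z/m\Z) \setminus (A_{(m)} + Y_-)$ forced by Cauchy--Davenport together with $|A| = (m+1)/3$, then a complement built from single lifts of the elements $y_a$ together with the Condition-(4) sets refined by Lemma \ref{lem: finite rest cover}, with minimality verified exactly as you describe. The hedges you add (the finite ``middle'' patch and the ``sufficiently large'' lifts) are unnecessary: Condition (4) makes each class $a + y_a + m\Z$ an exact disjoint union of $(C + y_a) \cap (a + y_a + m\Z)$ and $F + W_a'$, while the remaining classes $A_{(m)} + Y_-$ are covered in full because each $W_a$ is unbounded below and the map $a \mapsto a + y_a - f$ is a bijection onto $Y_-$, which is precisely what dissolves the bookkeeping obstacle you flag at the end.
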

\begin{proof}
Let $\overline{Y} = Y_+ \cup Y_- \cup Y_0$ be as in Theorem \ref{thm: ep necc}.  By Condition (2) of Theorem \ref{thm: ep necc}, for each $a \in A_{(m)}$ there exists $y_a \in Y_0$ such that 
$$a + y_a \in (F_{(m)} + Y_-) \cut ((A_{(m)}\cut \{a\} + Y_0) \cup (A_{(m)} + (Y_- \cup Y_+))\,.$$  
As in the proof of Theorem \ref{thm: add necc}, for each $a \in A_{(m)}$ the choice of $y_a$ is unique.  Moreover, the set $\Z \cut \left(\bigcup_{a \in A}a + y_a + m\Z\right)$ is contained in $A_{(m)} + Y_- + m\Z$.  

Let $F_{(m)} = \{f\}$, where $f \in \Z/m\Z$.  Thus $a + y_a - f \in Y_-$.  By Condition (4) of Theorem \ref{thm: add necc}, there exists $W_a \subseteq m\Z + (a - f)$ such that $m\Z + (a + y_a) \cap (\Z \cut C) = m\Z + (a + y_a) \cap (F + W_a)$.  Moreover, we can apply Lemma \ref{lem: finite rest cover} to $(W_a - (a - f))/m = \{w/m : w \in W_a - (a-f)\}$ and $(F - f)/m = \{y/m : y \in F - f\}$.  Thus there exists $W_a' \subseteq \Z$ such that $F + W_a' = F + W_a$ and $F + W_a' \neq F' + W_a'$ for any proper subset $F'$ of $F$.  

Let
$$W = \left(\bigcup_{a \in A} \{y_a\} \right) \cup \left(\bigcup_{a \in A} W_a'\right)\,.$$

Fix $x \in \Z/m\Z$.  If $x \in a + y_a + m\Z$ for some $a \in A$, then 
$x + m\Z$ is the disjoint union of $(y_a + C)\cap (x + m\Z)$ and $F + W_a'$, hence $x + m\Z \subseteq C + W$. In particular, if $c \in C \cap (x - y_a + m\Z)$, then $c + y_a \not\in C \cut \{c\} + W$.  Similarly, by our assumption on the choice of $W_a'$, for any proper subset $F' \subset F$ we have 
$$F' + W_a' \not= F + W_a' = ((\Z\cut (y_a + C)) \cap (x + m\Z)$$
Hence $C \cut \{f\} + W \not= C + W$.  

It remains to show that $C$ and $W$ are complements.  As discussed in the previous paragraph, we have 
$$C + W \supseteq \displaystyle\bigcup_{a \in A} (a + y_a + m\Z)\,.$$
As noted in the proof of Theorem \ref{thm: add necc}, we have $A + Y_- = \Z/m\Z \cut \{a + y_a : a \in A_m\}$.  Hence
$$A + \bigcup_{a \in A} W_a = (\Z/m\Z \cut \{a + y_a : a \in A_m\}) + m\Z\,.$$
Therefore $C$ is a minimal additive complement to $W$.  
\end{proof}

Proposition \ref{prop: evens odds} now follows as a special case of Theorems \ref{thm: ep necc}, \ref{thm: add necc}, and \ref{thm: ep suff}.

\begin{proof}[Proof of Proposition \ref{prop: evens odds}]
Let $m = 2$, $Y_- = \{0\} \subset \Z/2\Z$, $Y_+ = \emptyset$, and $Y_0 = \{1\} \subset \Z/2\Z$.  Then observe that $2\N \cup B \cup F$ satisfies the conditions of Theorems \ref{thm: ep necc}, \ref{thm: add necc}, and \ref{thm: ep suff} (with respect to the choice of $\overline Y = Y_- \cup Y_0$) if and only if $F$ and $B$ are as claimed.
\end{proof}

\section{Further Directions}\label{sec: further}

As discussed in Section \ref{sec: aperiodic}, Theorem \ref{thm: long runs not MAC} is a strengthening of the second part of Theorem \ref{thm: dualCY}.  We conjecture that the dual statement to Theorem \ref{thm: long runs not MAC} holds for sets of natural numbers.  Observe that the second part of Theorem \ref{thm: CY2} corresponds to the case $\ell = 1$ of the following conjecture. 
\begin{conjecture}\label{cor: extendedCY2}
Let $C = \{c_1 < c_2 < \cdots\}$ be an infinite, bounded-below set of integers,  and let
$$\overline{C} = \nnn \cut C = \{\overline{c_1} < \overline{c_2} < \cdots\}\,.$$
If there exists a $\ell > 0$ such that for all $m \in \nnn$ we have $\overline{c_{j}} - \overline{c_i} \notin [\ell,m]$ for sufficiently large $i$ and $j > i$, then $C$ does not have a minimal additive complement.
\end{conjecture}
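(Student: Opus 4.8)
The plan is to extend Chen and Yang's proof of Theorem~\ref{thm: CY2}(ii), which is the case $\ell = 1$ of this conjecture, to arbitrary $\ell$. The essential difference is that for $\ell=1$ the complement $\overline{C}$ is genuinely sparse (its gaps tend to infinity), whereas for $\ell > 1$ it merely breaks into \emph{clusters} of diameter less than $\ell$. I would first translate so that $C \subseteq \N$ and record the hypothesis structurally: it says exactly that for each $m$, all sufficiently large elements of $\overline{C}$ have pairwise differences avoiding $[\ell,m]$, so far out $\overline{C}$ consists of clusters of diameter less than $\ell$ separated by gaps tending to infinity; dually $C$ is syndetic (with gaps eventually less than $\ell$) and contains arbitrarily long intervals. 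The one fact I would use repeatedly is: for every fixed $d \ge \ell$ and all sufficiently large $z$, if $z \in \overline{C}$ then $z+d \in C$ and $z-d\in C$ (otherwise two large elements of $\overline{C}$ would sit at distance $d \in [\ell,d]$).

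Next I would argue by contradiction: suppose $C$ has a minimal additive complement $V$, so $C + V = \Z$ and, by the remark following Definition~\ref{dependenteltdef}, every $v \in V$ has a dependent element. Since $C$ is bounded below, covering arbitrarily negative integers forces $V$ to be unbounded below, hence infinite. It is convenient to use the translate form of the two conditions: $C+V=\Z$ says $V$ meets every translate $z-C$ of $-C$, and $v\in V$ has a dependent element precisely when some translate satisfies $(z-C)\cap V = \{v\}$ (unique incidence). Fixing one such pair $(v,z)$, the set $T = \{\,z-v' : v'\in V,\ v'<z,\ v'\neq v\,\}$ lies in $\overline{C}$ and is infinite, and differences within $T$ are exactly differences within the far-negative part of $V$. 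Applying the structural fact above to $\overline{C}$, I conclude that the far-negative part of $V$ must \emph{itself} cluster into blocks of diameter less than $\ell$ separated by gaps tending to $-\infty$. Thus minimality forces the hypothesis on $\overline{C}$ to propagate onto $V$. (The degenerate case $\overline{C}$ finite, i.e. $C$ cofinite in $\N$, is immediate: then every large $z-v'$ with $v'\in V$ very negative lies in $C$, so no $v$ can have a dependent element at all.)

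With both $\overline{C}$ and the negative tail of $V$ clustered with unbounded gaps, I would extract a covering contradiction from $C+V=\Z$. Choosing a very negative gap of $V$ with enormous length, the integers $z$ lying over the middle of this gap can only be represented as $v+c$ with $v$ in the single cluster at the lower edge of the gap (or the upper edge), which pins each admissible $c = z-v$ to a window of length less than $\ell$ around a large value. As $z$ sweeps the middle region, $z-v$ sweeps a long interval of $C$ that necessarily meets the windows of $\overline{C}$, and the goal is to locate a $z$ for which \emph{all} of the at most $\ell$ shifts coming from that one cluster of $V$ land inside windows of $\overline{C}$, so that $z$ is uncovered. For $\ell=1$ each cluster and each window is a single point and this is essentially Chen and Yang's argument; the general case requires matching the diameter-less-than-$\ell$ clusters of $V$ against the diameter-less-than-$\ell$ windows of $\overline{C}$ at every scale.

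The main obstacle is exactly this alignment step. A cluster of $V$ of diameter up to $\ell$ gives $z$ several chances (one per element of the cluster) to avoid a window of $\overline{C}$, and one must show these chances cannot all be blocked simultaneously, uniformly as the gaps of both sets grow. Unlike the dual Theorem~\ref{thm: long runs not MAC}, where the structured set was itself the complement and a single pair of its elements already forced a contradiction, here both $C$ and $V$ carry the clustered structure, so the argument appears to need a genuinely two-sided and quantitative control of how the windows of $C$ and the clusters of $V$ may interleave. Making this robust as the inter-cluster gaps tend to infinity is where I expect the real difficulty to lie, and it is the reason the statement resists a direct dualization of the earlier proofs.
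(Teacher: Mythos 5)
You should first be aware that the paper contains no proof of this statement: it is stated as Conjecture~\ref{cor: extendedCY2} in the ``Further Directions'' section, where the authors only remark that they \emph{expect} the methods of Chen and Yang to extend to it. So there is nothing to compare your attempt against, and it must be judged on its own terms. Judged that way, it is a partial reduction, not a proof. The steps you actually carry out are sound: the normalization $C\subseteq\N$; the observation that far out $\overline{C}$ consists of clusters of diameter less than $\ell$ separated by gaps tending to infinity (pairwise differences within a chain of small consecutive gaps would otherwise land in $[\ell,m]$); the fact that a minimal additive complement $V$ must be unbounded below and hence infinite; the disposal of the cofinite case; and, the most useful step, the propagation argument showing that a dependent pair $(v,z)$, via $T=\{z-v' : v'\in V,\ v'<z,\ v'\neq v\}\subseteq\overline{C}$, forces the negative tail of $V$ to inherit the same cluster structure. (One small repair there: this constrains only pairs in $V\setminus\{v\}$, so to get the clustering of all of the negative tail of $V$ you should run the argument a second time with a different dependent pair $(u,z_u)$, $u\neq v$, and combine.)

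The genuine gap is the final step, and you name it yourself: you never produce the contradiction. Having both $\overline{C}$ and the negative tail of $V$ clustered with growing gaps is not, on its face, incompatible with $C+V=\Z$ together with every $v\in V$ having a dependent element; the entire content of the conjecture is to show that the at-most-$\ell$ ``chances'' coming from a cluster of $V$ cannot be blocked by the diameter-less-than-$\ell$ windows of $\overline{C}$ at every scale simultaneously, or else to locate an integer left uncovered. For $\ell=1$ this is a pigeonhole on single points, which is why Chen and Yang's argument closes; for $\ell>1$ nothing you have written excludes a configuration in which, for every dependent pair, each far-negative cluster $D_j$ of $V$ maps under $z_v-\,\cdot\,$ into a single cluster of $\overline{C}$ (indeed the dependence constraints \emph{demand} exactly this), while the covering $C+V=\Z$ is simultaneously maintained. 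Until you exhibit either the uncovered integer or the redundant element of $V$, the proposal restates the conjecture in structural language rather than proving it --- which is consistent with the fact that the authors themselves left it open.
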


One could also study a variant of the eventually periodic sets, allowing for eventually periodic behavior in both the positive and negative directions.
\begin{question}
Which sets of the form $E_1 \cup E_2$, where $E_1$ and $-E_2$ are eventually periodic sets of integers, arise as minimal additive complements?
\end{question}
Note that such a set has a minimal additive complement by Theorem \ref{thm: CY1}.  We expect the behavior of these sets arising as minimal additive complements to be complex but interesting, and many of the methods used to analyze eventually periodic sets in Section \ref{sec: eventually periodic} may apply.  

A natural partition of the subsets of the integers is constructed as follows: $A \sim B$ if and only if the symmetric difference $A \Delta B$ is finite, i.e., $A$ and $B$ differ in finitely many places.  If a set $A$ arises (or does not arise) as a minimal additive complement, one can study whether sets $B$ such that $B \sim A$ also arise (or do not arise) as minimal additive complements.  

\begin{definition}
We say that a set $A\subseteq \Z$ is a \emph{robust MAC} if every $B \subseteq \Z$ satisfying $B \sim A$ arises as a minimal additive complement.  Similarly,  $A\subseteq \Z$ is a \emph{robust non-MAC} if every $B \subseteq \Z$ satisfying $B \sim A$ does not arise as a minimal additive complement.  
\end{definition}

We note that the sets satisfying Condition (i) of Theorem \ref{thm: dualCY} are robust MACs, and those satisfying Condition (ii) are robust non-MACs.  An eventually periodic set $(m\N + A) \cup B \cup F$ cannot be a robust MAC, since $m\N + A$ does not arise as a minimal additive complement.  However, eventually periodic sets may be robust non-MACs, e.g., the set $4\N + \{0,1,2\}$ by Corollary \ref{cor: ev per density}.  Kwon \cite{Kwo} showed that the finite sets are robust MACs.  It may be interesting to find further classes of robust MACs or non-MACs.

\begin{question}
Which sets are robust MACs or robust non-MACs? In particular, is there a syndetic robust MAC?
\end{question}

It may be interesting to determine which eventually periodic sets belong to a co-minimal pair, as some of the methods used in Section \ref{sec: eventually periodic} may apply.  Moreover, one could study the related question of whether there exists an eventually periodic set that both arises as and has a minimal additive complement but is not part of a co-minimal pair.  More generally, we raise the following question.

\begin{question}
Does there exist a single set of integers that both arises as and has a minimal additive complement but is not part of a co-minimal pair?
\end{question}

The study of minimal additive complements is also connected to the study of domination parameters of certain graphs, in particular the unitary Cayley graph of $\Z/n\Z$; see \cite{AD, Bur, DI, Vem} for some recent work on these graph parameters.  The \emph{unitary Cayley graph of $\mathbb{Z}/n\mathbb{Z}$}, denoted by $X_n$, is the graph on  $\{0,\dots,n-1\}$ (often viewed as $\Z/n\Z$) where vertices $a$ and $b$ are adjacent if and only if $\gcd(a-b,n) = 1$.  A set $W \subset \Z/n\Z$ is a \emph{dominating set} in $X_n$ if every vertex is in the closed neighborhood of $W$, and furthermore a dominating set is \emph{minimal} if none of its proper subsets is a dominating set.  Thus $W$ is a minimal dominating set of $X_n$ if and only if the set $W$ is a minimal additive complement to $P = (\{0\} \cup \{x \in \{1,\dots,n-1\} : \gcd(x,n) = 1\})$ in $\Z/n\Z$.  The maximum size of a minimal dominating set in $X_n$, known as the \emph{upper domination number} of $X_n$, is then precisely the maximum size of a minimal additive complement to $P + n\Z$ in the integers. Similarly, the minimum size of a (minimal) dominating set in $X_n$, known as the \emph{domination number} of $X_n$, is the minimum size of a (minimal) additive complement to $P + n\Z$ in the integers.  Similar statements can be given for any vertex-transitive graph.  This motivates a more general study of the sizes of minimal additive complements to a fixed syndetic set of integers.  
\begin{question}
Given a syndetic set $W \subseteq \Z$, what is the maximum and minimum size of a minimal additive complement to $W$ in the integers?
\end{question}

\section{Acknowledgments}\label{Sec:Acknowledgments}
We thank Andrew Kwon for a useful conversation leading to the consideration of robust MACs and non-MACS.  We also thank Max Kontorovich for providing us with a supportive work environment. The first author was supported by a Marshall scholarship and a St.\ John's College Benefactors' scholarship.

\end{document}